\newtheorem{theorem}{Theorem}[section]
\newtheorem{lemma}[theorem]{Lemma}
\theoremstyle{definition}
\theoremstyle{remark}
\newtheorem{remark}[theorem]{Remark}
\numberwithin{equation}{section}
\begin{document}
\title{Long time existence and bounded scalar curvature in the Ricci-harmonic
flow}
\author{Yi Li}
\address{Department of Mathematics, Shanghai Jiao Tong University, 800
Dongchuan Road, Shanghai, 200240 China}
\email{yilicms@gmail.com}

\thanks{Yi Li is partially sponsored by Shanghai Sailing Program (grant) No.
14YF1401400 and NSFC (grant) No. 11401374.}

\subjclass[2010]{Primary 53C44}
\keywords{Ricci-harmonic flow, curvature pinching estimates, bounded scalar curvature}

\maketitle

\begin{abstract} In this paper we study the long time existence of the Ricci-harmonic flow in
terms of scalar curvature and Weyl tensor which extends Cao's result \cite{Cao2011} in the Ricci flow. In dimension four, we also study
the integral bound of the ``Riemann curvature'' for the Ricci-harmonic flow generalizing a recently
result of Simon \cite{Simon2015}.
\end{abstract}

\renewcommand{\labelenumi}{Case \theenumi.} %
\newtheoremstyle{mystyle}{3pt}{3pt}{\itshape}{}{\bfseries}{}{5mm}{} %
\theoremstyle{mystyle} \newtheorem{Thm}{Theorem} \theoremstyle{mystyle} %
\newtheorem{lem}{Lemma} \newtheoremstyle{citing}{3pt}{3pt}{\itshape}{}{%
\bfseries}{}{5mm}{\thmnote{#3}} \theoremstyle{citing} %
\newtheorem*{citedthm}{}

\section{Introduction}\label{section1}

In this paper we study the long time existence of the Ricci-harmonic flow (RHF)
\begin{equation}
\partial_{t}g(t)=-2{\rm Ric}_{g(t)}
+2\alpha(t)\nabla_{g(t)}\phi(t)\otimes\nabla_{g(t)}\phi(t), \ \ \
\partial_{t}\phi(t)=\Delta_{g(t)}\phi(t).\label{1.1}
\end{equation}
introduced in \cite{List2005, List2008, Muller2009, Muller2012}, where $g(t)$ is a family of
Riemannian metric, $\phi(t)$ is a family of functions, and $t\in[0,T)$ (with $T\leq+\infty$). Here $\alpha(t)$ is a time-dependent positive constant. In particular, we may choose $\alpha(t)\equiv\alpha$ a positive constant. If all
functions $\phi(t)\equiv0$, we obtain the Ricci flow (RF) introduced by Hamilton in his
famous paper \cite{H1982} and definitely used by Perelman \cite{P1, P2, P3} on his work about the
Poincar\'e conjecture. The flow equations (\ref{1.1}) come from static Einstein vacuum equations arising in the general
relativity, and also arise as dimensional reductions of RF in higher
dimensions \cite{Lott-Sesum2014}.

For the RF, Hamilton \cite{H1982} showed that the short-time existence and
\begin{equation}
T<\infty\Longrightarrow\limsup_{t\to T}
\left(\max_{M}|{\rm Rm}_{g(t)}|^{2}_{g(t)}\right)=\infty.\label{1.2}
\end{equation}
Later, Sesum \cite{Sesum2005} improved Hamilton's result as
\begin{equation}
T<\infty\Longrightarrow\limsup_{t\to T}
\left(\max_{M}|{\rm Ric}_{g(t)}|^{2}_{g(t)}\right)
=\infty\label{1.3}
\end{equation}
by blow-up argument. For the integral bounds, Ye \cite{Ye2008} and Wang \cite{Wang2008} independently proved that
\begin{equation}
T<\infty\Longrightarrow\left(\int^{T}_{0}
\int_{M}|{\rm Rm}_{g(t)}|^{\frac{m+2}{2}}_{g(t)}dV_{g(t)}dt\right)^{\frac{2}{m+2}}
=\infty.\label{1.4}
\end{equation}
Moreover, Wang \cite{Wang2008} proved another version for RH that
\begin{equation}
{\rm Ric}_{g(t)}\geq-C, \ T<\infty\Longrightarrow
\left(\int^{T}_{0}\int_{M}
|R_{g(t)}|^{\frac{m+2}{2}}_{g(t)}dV_{g(t)}dt\right)^{\frac{2}{m+2}}=\infty.
\label{1.5}
\end{equation}
Here $C$ is a uniform constant. For other works on integral bounds, see for example \cite{Chen-Wang2013,
Ma-Cheng2010, Wang2012, Ye2008a}.

A well-known conjecture (see \cite{Cao2011}) about the extension of RF states that
{\it \begin{equation}
\text{Is it true for} \ \limsup_{t\to T}\left(\max_{M}R_{g(t)}\right)=\infty? \
\text{Here} \ T<\infty.\label{1.6}
\end{equation}}
This conjecture was settled for K\"ahler-Ricci flow by Zhang \cite{Zhang2010} and for type-I maximal
solution of RF by Enders-M\"uller-Topping \cite{Enders-Muller-Topping2011}. Cao \cite{Cao2011} proved the following
\begin{equation}
T<\infty\Longrightarrow\begin{array}{cc}
\text{either} \ \limsup_{t\to T}\left(\max_{M}R_{g(t)}\right)=\infty, \ \text{or}\\
\limsup_{t\to T}\left(\max_{M}R_{g(t)}\right)<\infty \ \text{but} \
\limsup_{t\to T}\frac{|W_{g(t)}|_{g(t)}}{R_{g(t)}}=\infty,
\end{array}\label{1.7}
\end{equation}
where $W_{g(t)}$ denotes the Weyl tensor of $g(t)$.

For $4D$ RF, Simon \cite{Simon2015} and Bamler-Zhang \cite{Bamler-Zhang2015} independently proved
\begin{equation}
T<\infty, \ |R_{g(t)}|\leq C\Longrightarrow
\int_{M}|{\rm Ric}_{g(t)}|^{2}_{g(t)}dV_{g(t)}, \
\int_{M}|{\rm Rm}_{g(t)}|^{2}_{g(t)}dV_{g(t)}\leq C'\label{1.8}
\end{equation}
by different methods (for earlier work see \cite{Tian-Zhang2013}).
\\

On the other hand, for the Ricci-harmonic flow (RHF) we have the following results. When $m=\dim M\geq3$ and
$T<+\infty$. M\"uller \cite{Muller2009, Muller2012} showed that (\ref{1.2}) is also true for RHF. Recently, Cheng and Zhu \cite{CZ2013} extended Sesum's result \cite{Sesum2005} to Ricci-harmonic flow, that is, (\ref{1.3}) is true for RHF. For more results about the RHF, see \cite{Abolarinwa2015, Bailesteanu2013a, Bailesteanu2013b,
Bailesteanu-Tran2013,
Cao-Guo-Tran2015, CZ2013, Fang2013a, Fang2013b, Fang-Zheng2015, Fang-Zheng2016, Fang-Zhu2015, Guo-He2014,
Guo-Huang-Phong2015, Guo-Ishida2014, Guo-Philipowski-Thalmaier2013, Guo-Philipowski-Thalmaier2014,
Guo-Philipowski-Thalmaier2015,Li2014, List2005, List2008, Liu-Wang2014, Muller2009,
Muller2010, Muller2012, Muller-Rupflin2015, Tadano2015a, Tadano2015b, WLF2012, Williams2015, Zhu2013}.

The first main result is an extension of Cao's result \cite{Cao2011} to RHF.

\begin{theorem}\label{t1.1} Let $(M,g(t),\phi(t))_{t\in[0,T)}$ be a solution to RHF (\ref{1.1}) with $\alpha(t)
\equiv\alpha$ a positive constant on a closed manifold $M$ with $m=\dim M\geq3$ and $T<+\infty$. Either one has
\begin{equation*}
\limsup_{t\to T}\left(\max_{M} R_{g(t)}\right)=\infty
\end{equation*}
or
\begin{equation*}
\limsup_{t\to T}
\left(\max_{M}R_{g(t)}\right)<\infty \ \text{but} \
\limsup_{t\to T}
\left(\max_{M}\frac{|W_{g(t)}|_{g(t)}+|\nabla^{2}_{g(t)}
\phi(t)|^{2}_{g(t)}}{R_{g(t)}}\right)=\infty.
\end{equation*}
Here $W_{g(t)}$ is the Weyl part of ${\rm Rm}_{g(t)}$.
\end{theorem}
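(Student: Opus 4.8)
The plan is to argue by contradiction through a parabolic blow-up, so that in the rescaled limit the Ricci-harmonic flow degenerates to an ancient Ricci flow whose rigidity yields the contradiction. Suppose $T<\infty$ and that \emph{both} alternatives fail; by the elementary logic of the dichotomy this means there are constants $C_1,C_2$ with
\[
R_{g(t)}\le C_1,\qquad |W_{g(t)}|+|\nabla^2_{g(t)}\phi(t)|^2\le C_2\,R_{g(t)}\le C_2C_1\qquad\text{on }[0,T).
\]
First I would record the standard a priori bounds for RHF. Writing $S_{ij}=R_{ij}-\alpha\nabla_i\phi\nabla_j\phi$ and $S=R-\alpha|\nabla\phi|^2$, Müller's evolution $\partial_tS\ge\Delta S+\tfrac{2}{m}S^2\ge\Delta S$ together with the scalar minimum principle gives $S\ge\min_M S(0)$, whence $R\ge S\ge-C_0$ is bounded below; and $\partial_t|\nabla\phi|^2\le\Delta|\nabla\phi|^2$ gives $|\nabla\phi|^2\le\max_M|\nabla\phi|^2(0)$. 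Thus $R$, $|W|$, $|\nabla^2\phi|^2$ and $|\nabla\phi|^2$ are uniformly bounded on $[0,T)$. Since Müller's extension theorem (the RHF form of (\ref{1.2})) asserts $\limsup_{t\to T}\max_M|\mathrm{Rm}_{g(t)}|=\infty$ when $T<\infty$, to reach a contradiction it suffices to produce a uniform bound on $|\mathrm{Rm}|$; so I instead assume $\sup_{M\times[0,T)}|\mathrm{Rm}|=\infty$.

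Next I would run Hamilton's point-picking: choose $(x_i,t_i)$ with $t_i\to T$ and $K_i:=|\mathrm{Rm}_{g(t_i)}|(x_i)=\max_{M\times[0,t_i]}|\mathrm{Rm}|\to\infty$, and form the parabolic rescalings $g_i(t)=K_i\,g(t_i+t/K_i)$, $\phi_i(t)=\phi(t_i+t/K_i)$. Because the coupling constant is the \emph{constant} $\alpha$, and since $\mathrm{Ric}$ and $\nabla\phi\otimes\nabla\phi$ are scale-invariant $(0,2)$-tensors while $\phi$ itself does not rescale, each $(g_i,\phi_i)$ is again a solution of (\ref{1.1}) with the same $\alpha$. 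The scalings give $|\mathrm{Rm}_{g_i}|\le1$ (with equality at the base point at time $0$), together with
\[
R_{g_i}=K_i^{-1}R_g\to0,\quad |W_{g_i}|=K_i^{-1}|W_g|\to0,\quad |\nabla\phi_i|^2=K_i^{-1}|\nabla\phi|^2\to0,\quad |\nabla^2\phi_i|^2=K_i^{-2}|\nabla^2\phi|^2\to0,
\]
uniformly. Extracting a pointed limit $(M_\infty,g_\infty(t),\phi_\infty)$, a complete \emph{ancient} solution of (\ref{1.1}), is the crux of the argument and the step I expect to be hardest: it requires $\kappa$-noncollapsing at $(x_i,t_i)$ (from the Perelman-type entropy monotonicity for RHF of List and Müller) to control the injectivity radius, Shi-type interior derivative estimates for RHF (Müller) for smooth Cheeger-Gromov-Hamilton convergence of the metrics, and, crucially, uniform higher-derivative control of the maps $\phi_i$. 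It is precisely here that the hypothesis $|\nabla^2\phi|^2\le C_2R$ is indispensable: it ties the Hessian of $\phi$ to the bounded scalar curvature, forcing $|\nabla^2\phi_i|\to0$ and ruling out a harmonic-map ``bubble'' in the limit, so that after normalizing $\phi_i$ by a constant the limit $\phi_\infty$ is constant in space and time.

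Finally I would analyze the limit. Since $\phi_\infty$ is constant, $(M_\infty,g_\infty(t))$ is an ancient \emph{Ricci flow}, and by the above $R_{g_\infty}\equiv0$ and $W_{g_\infty}\equiv0$ for all $t\le0$. From $\partial_tR=\Delta R+2|\mathrm{Ric}|^2$ and $R_{g_\infty}\equiv0$ one gets $|\mathrm{Ric}_{g_\infty}|^2\equiv0$, i.e. $\mathrm{Ric}_{g_\infty}\equiv0$; then $R_{g_\infty}\equiv0$, $\mathrm{Ric}_{g_\infty}\equiv0$ and $W_{g_\infty}\equiv0$ feed into the orthogonal decomposition $|\mathrm{Rm}|^2=|W|^2+\tfrac{4}{m-2}|\mathring{\mathrm{Ric}}|^2+\tfrac{2}{m(m-1)}R^2$ (valid for $m\ge3$) to give $\mathrm{Rm}_{g_\infty}\equiv0$, so $g_\infty$ is flat, contradicting $|\mathrm{Rm}_{g_\infty}|=1$ at the base point. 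I would remark that a direct maximum-principle attack on $|\mathrm{Ric}|^2$ does not close: after the curvature decomposition and the bound $|W|\le C_2R$, its reaction term still contains the cubic quantity $\tfrac{8}{m-2}\mathrm{tr}(\mathrm{Ric}^3)$, which is not controlled by $|\mathrm{Ric}|^2$ and can in principle drive finite-time blow-up; this is what forces the blow-up and contradiction route above.
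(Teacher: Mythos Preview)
Your blow-up argument is essentially correct, but it is \emph{not} the paper's approach.  The paper proceeds by a direct parabolic maximum principle, establishing first a pointwise \emph{curvature pinching estimate} (Theorem~2.2): introducing a ``Riemann tensor'' ${\rm Sm}$ for RHF so that $g^{k\ell}S_{ik\ell j}=S_{ij}$, one computes the evolution of the scale-invariant quantity $f=|{\rm Sin}|^{2}/(S+C)^{2}$ with ${\rm Sin}={\rm Sic}-\tfrac{S}{m}g$, decomposes ${\rm Sm}$ in terms of the Weyl tensor and ${\rm Sic}$, and after a careful calculation obtains
\[
\Box f\;\le\;2\langle\nabla f,\nabla\ln S'\rangle+4S'f\Bigl[-f+\tfrac{2}{m-2}f^{1/2}+C_{0}+C_{0}\tfrac{|W|+|\nabla^{2}\phi|^{2}}{S'}\Bigr],
\]
where $S'=S+C>0$.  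The maximum principle then gives $|{\rm Sin}|/(S+C)\le C_{1}+C_{2}\max\sqrt{(|W|+|\nabla^{2}\phi|^{2})/(S+C)}$.  The contradiction is now immediate and requires no compactness: if both $R$ and $|W|+|\nabla^{2}\phi|^{2}$ stay bounded, the pinching bounds $|{\rm Sin}|$, hence $|{\rm Sic}|$, hence $|{\rm Ric}|$ (since $|\nabla\phi|^{2}$ is bounded), and with $|W|$ bounded this bounds $|{\rm Rm}|$, contradicting M\"uller's extension criterion.  Thus your closing remark that ``a direct maximum-principle attack does not close'' is premature: applied to the \emph{ratio} $|{\rm Sin}|^{2}/(S+C)^{2}$ rather than to $|{\rm Ric}|^{2}$, the cubic term ${\rm tr}({\rm Sic}^{3})$ is absorbed by the $-f^{2}$ term and the argument does close---this is exactly the mechanism of Cao's original paper, transplanted to RHF.

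What each route buys: the paper's pinching argument is self-contained (no $\kappa$-noncollapsing, no Shi estimates, no Hamilton compactness) and, more importantly, produces a quantitative estimate (Theorem~2.2) that is reused throughout Sections~3--4 to derive the $L^{2}$ and $L^{4}$ curvature bounds.  Your blow-up route is conceptually cleaner but imports the full compactness machinery for RHF.  One further observation about your own argument: the hypothesis $|\nabla^{2}\phi|^{2}\le C_{2}R$ is \emph{not} in fact what rules out a harmonic-map bubble or delivers compactness---compactness already follows from $|{\rm Rm}_{g_{i}}|\le 1$ via the RHF Shi estimates, and $\phi_{\infty}$ is constant simply because $|\nabla\phi_{i}|^{2}=K_{i}^{-1}|\nabla\phi|^{2}\to 0$.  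In the pinching approach, by contrast, the $|\nabla^{2}\phi|^{2}$ term enters unavoidably through $\Box{\rm Sic}=\cdots+2\alpha\,\Delta\phi\,\nabla^{2}\phi$ in Lemma~2.1, which explains its presence in the statement.
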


The second main result focuses on the $4D$ RHF. Write
\begin{equation*}
{\rm Sic}_{g(t)}:={\rm Ric}_{g(t)}
-\alpha\nabla_{g(t)}\phi(t)\otimes\nabla_{g(t)}
\phi(t), \ \ \ S_{g(t)}:=R_{g(t)}-\alpha|\nabla_{g(t)}
\phi(t)|^{2}_{g(t)}.
\end{equation*}
According to Theorem \ref{t2.2} below we can find a uniform constant $C$ such that $S_{g(t)}+C>0$ for all
$t\in[0,T)$.

\begin{theorem}\label{t1.2} Let $(M, g(t),\phi(t))_{t\in[0,T)}$ be a solution to RHF (\ref{1.1}) on a closed manifold $M$
with $m=\dim M=4$, $T\leq+\infty$, $\alpha(t)\equiv\alpha$ a positive constant. Choose a uniform constant
$C$ in
Theorem \ref{t2.2} such that $S_{g(t)}+C>0$. Then
\begin{eqnarray}
\int_{M}|{\rm Sic}_{g(s)}|_{g(s)}dV_{g(s)}&\leq&2c_{0}(M,g(0),\phi(0),s)
+\frac{C}{2}{\rm Vol}(M,g(s))\label{1.9}\\
&&+ \ 1148e^{36Cs}\int^{s}_{0}\int_{M}S^{2}_{g(t)}\!\ dV_{g(t)}dt,\nonumber\\
\int^{s}_{0}\int_{M}|{\rm Sic}_{g(t)}|^{2}_{g(t)}dV_{g(t)}dt&\leq&8 c_{0}(M, g(0),\phi(0),s)
+\frac{C^{2}}{4}\int^{s}_{0}{\rm Vol}(M, g(t))\!\ dt\nonumber\\
&&+ \ 4592 e^{36Cs}\int^{s}_{0}\int_{M}S^{2}_{g(t)}\!\ dV_{g(t)}dt,\label{1.10}
\end{eqnarray}
for all $s\in[0,T)$. Here, $A_{1}=\max_{M}|\nabla_{g(0)}\phi(0)|^{2}_{g(0)}$ and
\begin{eqnarray}
&&c_{0}(M, g(0),\phi(0),s)\nonumber\\
&=&\frac{256\pi^{2}\chi(M)}{36C}\left(e^{36Cs}-1\right)
+\frac{104\alpha^{2}A^{2}_{1}{\rm Vol}(M,g(0))}{35C}\left(e^{35Cs}
-e^{Cs}\right)\label{1.11}\\
&&+ \ e^{37Cs}\alpha A_{1}{\rm Vol}(M, g(0))+e^{36Cs}\int_{M}\frac{|{\rm Sic}_{g(0)}|^{2}_{g(0)}}{S_{g(0)}
+C}dV_{g(0)}.\nonumber
\end{eqnarray}
\end{theorem}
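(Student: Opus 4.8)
The plan is to control everything through the single weighted energy
\[
F(t):=\int_M\frac{|{\rm Sic}_{g(t)}|^2_{g(t)}}{S_{g(t)}+C}\,dV_{g(t)},
\]
which is exactly the quantity whose value at $t=0$ appears in the last summand of $c_0$ in (\ref{1.11}). The whole theorem will follow from a Gr\"onwall inequality for $F$ of the shape
\[
\frac{d}{dt}F(t)+\int_M|{\rm Sic}_{g(t)}|^2_{g(t)}\,dV_{g(t)}\ \leq\ 36C\,F(t)+256\pi^2\chi(M)+c_1\!\int_M S_{g(t)}^2\,dV_{g(t)}+c_2\,\alpha A_1\,{\rm Vol}(M,g(t))+c_3\,\alpha^2A_1^2\,{\rm Vol}(M,g(t)),
\]
in which the topological constant enters through the four-dimensional Gauss--Bonnet--Chern formula. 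Dropping the good term $\int_M|{\rm Sic}|^2$ and solving the resulting linear inequality by Gr\"onwall with rate $36C$ produces $F(s)\leq c_0(M,g(0),\phi(0),s)+574\,e^{36Cs}\int_0^s\!\int_M S^2$: the first summand of $c_0$ is the Gr\"onwall integral $\int_0^s e^{36C(s-t)}256\pi^2\chi\,dt=\frac{256\pi^2\chi}{36C}(e^{36Cs}-1)$, the middle summands are the Gr\"onwall integrals of the $\alpha A_1$- and $\alpha^2A_1^2$-sources (using the volume estimate below), and the last summand is the initial value $e^{36Cs}F(0)$.

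To derive the displayed inequality I would start from the evolution equations of List and M\"uller recalled around Theorem \ref{t2.2}: the volume form satisfies $\partial_t\,dV=-S\,dV$, the reduced scalar obeys $\partial_t S=\Delta S+2|{\rm Sic}|^2+2\alpha(\Delta\phi)^2$, and ${\rm Sic}_{ij}$ solves a Lichnerowicz-type equation whose square satisfies $\partial_t|{\rm Sic}|^2=\Delta|{\rm Sic}|^2-2|\nabla{\rm Sic}|^2+{\rm Rm}_{g}\ast{\rm Sic}\ast{\rm Sic}+(\text{first order in }\nabla\phi)$. Writing $f:=S+C>0$ and differentiating $F=\int f^{-1}|{\rm Sic}|^2\,dV$ by the quotient rule, I would integrate by parts twice in the $\int f^{-1}\Delta|{\rm Sic}|^2$ and $\int f^{-2}|{\rm Sic}|^2\Delta f$ terms. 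The second-order contributions then reorganize into the manifestly good piece $-2\int f^{-1}|\nabla{\rm Sic}|^2$ together with cross terms $\int f^{-2}\langle\nabla|{\rm Sic}|^2,\nabla f\rangle$ and $\int f^{-3}|{\rm Sic}|^2|\nabla f|^2$ that are absorbed by Cauchy--Schwarz into the gradient term. Crucially, the volume term contributes $-\int f^{-1}|{\rm Sic}|^2(f-C)\,dV=-\int|{\rm Sic}|^2+C\,F$, supplying both the good quadratic term $-\int|{\rm Sic}|^2$ and a first installment of the linear rate, while $-\int f^{-2}|{\rm Sic}|^2\cdot 2|{\rm Sic}|^2=-2\int f^{-2}|{\rm Sic}|^4$ is a further good term.

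The heart of the argument---and the step I expect to be the main obstacle---is the curvature reaction term $\int f^{-1}{\rm Rm}\ast{\rm Sic}\ast{\rm Sic}$. In dimension four I would split ${\rm Rm}$ into its Weyl part and its Ricci/scalar parts. The Ricci/scalar contributions are estimated pointwise: using $R=S+\alpha|\nabla\phi|^2$, $|\mathring{\mathrm{Ric}}|\leq|\mathring{\mathrm{Sic}}|+\alpha|\nabla\phi|^2$, and---after enlarging $C$ so that $C\geq\alpha A_1$---the bounds $f^{-1}|R|\leq 1+2Cf^{-1}$ and $f^{-1}|\mathring{\mathrm{Sic}}|\,|{\rm Sic}|^2\leq\tfrac{\delta}{2}f^{-2}|{\rm Sic}|^4+\tfrac1{2\delta}|{\rm Sic}|^2$, so that these terms fall into the good quartic and quadratic terms plus multiples of $C\,F$. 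The genuinely dangerous piece is the Weyl contribution $\int f^{-1}|W|\,|{\rm Sic}|^2$, for which no pointwise bound on $W$ is available; here Young's inequality $\int f^{-1}|W||{\rm Sic}|^2\leq\tfrac{\delta}{2}\int f^{-2}|{\rm Sic}|^4+\tfrac1{2\delta}\int|W|^2$ is followed by the Gauss--Bonnet--Chern identity
\[
\int_M|W|^2\,dV=32\pi^2\chi(M)+2\int_M|\mathring{\mathrm{Ric}}|^2\,dV-\frac16\int_M R^2\,dV,
\]
which trades $\int|W|^2$ for the topological term $\chi(M)$ at the price of $\int|\mathring{\mathrm{Ric}}|^2$ and $\int R^2$; the former is reabsorbed into the good $-\int|{\rm Sic}|^2$ (again at the cost of $\alpha A_1$ and $\alpha^2A_1^2$ terms), and the latter is converted to the source $c_1\int S^2$ via $R^2\leq 2S^2+2\alpha^2|\nabla\phi|^4$. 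Every surviving $\nabla\phi$ factor is bounded uniformly by the maximum-principle estimate $|\nabla\phi_{g(t)}|^2\leq A_1$ for RHF, and the volumes by $\,{\rm Vol}(M,g(t))\leq e^{Ct}{\rm Vol}(M,g(0))$, which comes from $\frac{d}{dt}{\rm Vol}=-\int_M S\leq C\,{\rm Vol}$ since $S>-C$. Choosing the Young parameters $\delta$ so that the quartic and quadratic good terms exactly absorb their counterparts, and collecting all linear terms into the rate $36C$, is the delicate bookkeeping that fixes the universal constants.

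Finally I would read off the two stated estimates. For (\ref{1.9}) I use Young's inequality $|{\rm Sic}|=f^{-1/2}|{\rm Sic}|\cdot f^{1/2}\leq f^{-1}|{\rm Sic}|^2+\tfrac14 f$, integrate, and exploit the four-dimensional trace bound $S=\mathrm{tr}_g{\rm Sic}\leq 2|{\rm Sic}|$ to absorb the unwanted $\int_M S$: from $\int_M|{\rm Sic}|\leq F+\tfrac14\int_M S+\tfrac C4{\rm Vol}\leq F+\tfrac12\int_M|{\rm Sic}|+\tfrac C4{\rm Vol}$ one gets $\int_M|{\rm Sic}|\leq 2F(s)+\tfrac C2{\rm Vol}(M,g(s))$, and inserting the Gr\"onwall bound on $F(s)$ gives (\ref{1.9}) with $2c_0$ and $1148=2\cdot 574$. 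For the space-time bound (\ref{1.10}), instead of discarding the good term $\int_M|{\rm Sic}|^2$ I retain it and integrate the full differential inequality over $[0,s]$; bounding $\int_0^s F\,dt$ by the pointwise-in-time estimate just obtained lets me solve for $\int_0^s\!\int_M|{\rm Sic}|^2$, the extra time integration and the handling of $\int_M(S+C)^2$ upgrading the constants $2c_0,\,1148,\,\tfrac C2{\rm Vol}$ of (\ref{1.9}) to $8c_0,\,4592,\,\tfrac{C^2}{4}\int_0^s{\rm Vol}$ in (\ref{1.10}). This would complete the proof.
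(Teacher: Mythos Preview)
Your overall architecture---study $F(t)=\int_M|{\rm Sic}|^2/(S+C)$, derive a differential inequality, feed in Gauss--Bonnet--Chern to control the curvature reaction, then Gr\"onwall and read off (\ref{1.9})--(\ref{1.10}) via Young---is the paper's. Your derivation of (\ref{1.9}) from the bound on $F(s)$ is exactly the paper's (\ref{3.22}). There are, however, one genuine gap and one place where the bookkeeping does not close as you describe.

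\textbf{The $\phi$-reaction is second order, not first order.} You write the evolution of $|{\rm Sic}|^2$ as having only ``first order in $\nabla\phi$'' lower-order terms and later bound ``every surviving $\nabla\phi$ factor'' by $A_1$. In fact, from $\Box S_{ij}=2S_{kij\ell}S^{k\ell}-2S_{ik}S^k{}_j+2\alpha\,\Delta\phi\,\nabla_i\nabla_j\phi$ (Lemma~\ref{l2.1} with $\dot\alpha=0$) and $\Box S=2|{\rm Sic}|^2+2\alpha(\Delta\phi)^2$, the quotient rule leaves in $\Box\bigl(|{\rm Sic}|^2/(S+C)\bigr)$ the combination
\[
-2\alpha\Bigl|\Delta\phi\,\tfrac{{\rm Sic}}{S+C}-\nabla^2\phi\Bigr|^2+2\alpha\,|\nabla^2\phi|^2,
\]
and the residual $+2\alpha|\nabla^2\phi|^2$ admits no pointwise bound by $A_1$. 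The paper controls it by a separate space--time estimate obtained from $\Box|\nabla\phi|^2=-2\alpha|\nabla\phi\otimes\nabla\phi|^2-2|\nabla^2\phi|^2$:
\[
2\int_0^t\!\!\int_M|\nabla^2\phi|^2\,dV\,ds\ \le\ e^{Ct}A_1\,{\rm Vol}(M,g(0)).
\]
This is the true origin of the $e^{37Cs}\alpha A_1{\rm Vol}(M,g(0))$ summand in $c_0$; without it the inequality does not close.

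\textbf{The absorption in the Gauss--Bonnet step.} Your plan is to Young the Weyl piece against the quartic good term $-2\int(S+C)^{-2}|{\rm Sic}|^4$ and absorb the $\int|\mathring{\rm Ric}|^2$ that $\int|W|^2=32\pi^2\chi+2\int|\mathring{\rm Ric}|^2-\tfrac16\int R^2$ throws off into the quadratic good term $-\int|{\rm Sic}|^2$. A coefficient count shows these two constraints on the Young parameter are incompatible (one forces $\delta<1$, the other $\delta>4$). The paper avoids this by not treating $\int|{\rm Sic}|^2$ as a standalone sink: it Youngs the \emph{whole} reaction against $|{\rm Sm}|^2$, uses Lemma~\ref{l3.1} to write Gauss--Bonnet--Chern in ${\rm Sm}$-form, and then converts every resulting $|{\rm Sic}|^2$ into $f\,(S+C)=fS+Cf$ with $f=|{\rm Sic}|^2/(S+C)$, applying a \emph{second} Young $fS\le\eta f^2+S^2/(4\eta)$. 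This routes every $|{\rm Sic}|^2$ appearance into the $f^2$, $Cf$, and $S^2$ columns, which balance at $\epsilon^2=8$ and produce the constants $36C$, $574$, $256\pi^2\chi$ in (\ref{3.16}).

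\textbf{On (\ref{1.10}).} The clean route to the constants $8$ and $4592=8\cdot574$ is not to recycle the quadratic good term but to retain the \emph{quartic} term $\int_0^s\!\int_M f^2$ through the Gr\"onwall step (so (\ref{3.17}) simultaneously bounds $F(s)$ and $\int_0^s\!\int_M f^2$) and then invoke the pointwise inequality $|{\rm Sic}|^2\le 8\,|{\rm Sic}|^4/(S+C)^2+C^2/4$, proved exactly as you proved $|{\rm Sic}|\le 2f+C/2$.
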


According to Theorem \ref{t2.3} below and following \cite{Simon2015}, we consider the basic assumption
({\bf BA}) for a solution $(M, g(t),\phi(t))_{t\in[0,T)}$ to RHF:
\begin{itemize}

\item[(a)] $M$ is a connected and closed $4$-dimensional smooth manifold,

\item[(b)] $(M, g(t),\phi(t))_{t\in[0,T)}$ is a solution to RHF with $\alpha(t)\equiv\alpha$ a positive constant,

\item[(c)] $T<+\infty$,

\item[(d)] $\max_{M\times[0,T)}|S_{g(t)}|\leq1$.

\end{itemize}
The upper bound $1$ in condition (d) is not essential, since we can rescale the pair
$(g(t), \phi(t))$ so that the condition (d) is always satisfied. Furthermore, since
\begin{equation*}
|\nabla_{g(t)}
\phi(t)|^{2}_{g(t)}\leq A_{1}
\end{equation*}
(by (\ref{3.6})) it follows that condition (d) is equivalent to the uniform bound for $R_{g(t)}$.

\begin{theorem}\label{t1.3} If $(M, g(t),\phi(t))_{t\in[0,T)}$ satisfies {\bf BA}, then
\begin{eqnarray}
\int_{M}|{\rm Sic}_{g(s)}|^{2}_{g(s)}dV_{g(s)}&\leq&b(M,g(0),\phi(0),s),\label{1.12}\\
\int^{s}_{0}\int_{M}|{\rm Sic}_{g(t)}|^{4}_{g(t)}dV_{g(t)}dt&\leq&b(M,g(0),\phi(0),s),\label{1.13}
\end{eqnarray}
for any $s\in[0,T)$. Here
\begin{eqnarray}
b(M,g(0),\phi(0),s)&:=&9e^{88s}\int_{M}|{\rm Sic}_{g(0)}|^{2}_{g(0)}dV_{g(0)}
+\frac{1152}{88}\pi^{2}\chi(M)\left(e^{88s}-1\right)\nonumber\\
&&+ \ \frac{468}{86}(\alpha A_{1})^{2}{\rm Vol}(M,g(0))\left(e^{88s}
-e^{2s}\right)\label{1.14}\\
&&+ \ 9(\alpha A_{1}){\rm Vol}(M,g(0))e^{90s}.\nonumber
\end{eqnarray}
\end{theorem}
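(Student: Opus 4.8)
The plan is to mirror Simon's four-dimensional argument \cite{Simon2015}, but with the Ricci flow quantities $\mathrm{Ric}$ and $R$ systematically replaced by the modified tensors $\mathrm{Sic}$ and $S$, for which the flow reads $\partial_t g=-2\,\mathrm{Sic}$ and the volume form evolves by $\partial_t\,dV=-S\,dV$. The engine of the proof is the time-evolution of the energy $\int_M|\mathrm{Sic}|^2\,dV$, in which the reaction (cubic) terms are tamed using three ingredients: the four-dimensional Gauss--Bonnet--Chern identity (so that $\int_M|W|^2$ is controlled by the topological constant $\chi(M)$ together with lower-order quantities), the basic assumption \textbf{BA} (which bounds every term built from $S$), and the pointwise bound $|\nabla\phi|^2\le A_1$ from (3.6) (which bounds every term built from the harmonic-map coupling $\alpha\,\nabla\phi\otimes\nabla\phi$). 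I note that Theorem \ref{t1.2}, under \textbf{BA} (so that $\int_0^s\!\int_M S^2$ is finite), already furnishes a finite space--time $L^2$ bound on $\mathrm{Sic}$, which can seed the bootstrap.

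First I would record the evolution equation for $|\mathrm{Sic}|^2$ under (1.1), of the schematic form $\partial_t|\mathrm{Sic}|^2=\Delta|\mathrm{Sic}|^2-2|\nabla\mathrm{Sic}|^2+\mathrm{Rm}\ast\mathrm{Sic}\ast\mathrm{Sic}+(\phi\text{-terms})$, the last group collecting the contributions of $\nabla^2\phi$ and $\alpha\,\nabla\phi\otimes\nabla\phi$ that are absent in pure Ricci flow. Integrating over $M$ and using $\partial_t\,dV=-S\,dV$ gives
\begin{equation*}
\frac{d}{dt}\int_M|\mathrm{Sic}|^2\,dV=-2\int_M|\nabla\mathrm{Sic}|^2\,dV+\int_M\big(\mathrm{Rm}\ast\mathrm{Sic}\ast\mathrm{Sic}\big)\,dV-\int_M S\,|\mathrm{Sic}|^2\,dV+(\phi\text{-terms}).
\end{equation*}
In dimension four I would decompose $\mathrm{Rm}=W+(\text{Ricci part})+(\text{scalar part})$ and split the cubic integrand accordingly. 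The scalar part is $\le$ const $\cdot|\mathrm{Sic}|^2$ by \textbf{BA}, the coupling terms are controlled by $\alpha A_1$, and the genuinely dangerous term is $\int_M|W|\,|\mathrm{Sic}|^2$. Here the Gauss--Bonnet--Chern identity $32\pi^2\chi(M)=\int_M\big(|W|^2-2|\mathring{\mathrm{Ric}}|^2+\tfrac16 R^2\big)\,dV$, rewritten in terms of $\mathrm{Sic}$ and $S$ using $\mathrm{Ric}-\mathrm{Sic}=\alpha\,\nabla\phi\otimes\nabla\phi$, gives a uniform bound $\int_M|W|^2\,dV\le 32\pi^2\chi(M)+c\int_M|\mathrm{Sic}|^2\,dV+c'$, where $c'$ absorbs the $S$- and $A_1$-contributions.

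With these reductions the energy evolution becomes a differential inequality of the form
\begin{equation*}
\frac{d}{dt}\int_M|\mathrm{Sic}|^2\,dV+2\int_M|\nabla\mathrm{Sic}|^2\,dV\le \Lambda\int_M|\mathrm{Sic}|^2\,dV+K,
\end{equation*}
with the Gronwall rate $\Lambda=88$ and a constant $K$ built from $\chi(M)$, $\alpha A_1$ and $\mathrm{Vol}(M,g(0))$ produced by collecting all the constants above. Dropping the good gradient term and applying Gronwall's inequality yields (1.12) with the stated function $b$; the explicit exponentials $e^{88s}$, $e^{90s}$, $e^{2s}$ and the numerical coefficients in (1.14) are exactly what the accumulated constants and sub-estimates produce. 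For the space--time bound (1.13) I would instead retain the good term, integrate the inequality in $t$, and combine $\int_0^s\!\int_M|\nabla\mathrm{Sic}|^2$ with the four-dimensional critical Sobolev inequality $\|\mathrm{Sic}\|_{L^4}^2\le C_S\big(\|\nabla\mathrm{Sic}\|_{L^2}^2+\|\mathrm{Sic}\|_{L^2}^2\big)$, whose constant is uniform along the flow because \textbf{BA} supplies a uniform bound on $R$, which via the results underlying the set-up of \textbf{BA} (Theorem \ref{t2.3}) yields a uniform Sobolev constant; feeding in the uniform $L^2$ bound (1.12) then closes (1.13).

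The hard part is the borderline term $\int_M|W|\,|\mathrm{Sic}|^2$. Because $L^4$ is exactly the critical Sobolev exponent in dimension four, the naive estimate $\int_M|W|\,|\mathrm{Sic}|^2\le\|W\|_{L^2}\|\mathrm{Sic}\|_{L^4}^2\le\|W\|_{L^2}C_S\big(\|\nabla\mathrm{Sic}\|_{L^2}^2+\|\mathrm{Sic}\|_{L^2}^2\big)$ cannot be absorbed into the good term $-2\|\nabla\mathrm{Sic}\|_{L^2}^2$ without a smallness of $\|W\|_{L^2}$ that \textbf{BA} does not provide. The crucial point --- and the heart of Simon's method \cite{Simon2015} that I would import --- is that the constancy of $\chi(M)$ in time furnishes an extra integral identity among the cubic curvature integrals (obtained by differentiating the Gauss--Bonnet--Chern formula and using that its left-hand side has zero time derivative), and this identity is precisely what cancels the uncontrolled Weyl contribution against the good gradient terms. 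Everything else --- the evolution equations of $\mathrm{Sic}$ and $S$, the bookkeeping of the $\phi$-coupling via $A_1$, and the tracking of the explicit constants in (1.14) --- is routine once this cancellation is in place.
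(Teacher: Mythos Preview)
Your approach has a genuine gap: you are tracking the wrong evolving quantity, and the mechanism you invoke is not the one that works. The paper (following Simon) does not study $\int_M|{\rm Sic}|^2$ directly but rather the quotient $f:=|{\rm Sic}|^2/(S+C)$ with $C=2$ under \textbf{BA}. The heat-type evolution of this quotient (equation (3.3)) carries a built-in \emph{negative quadratic} reaction term $-2f^2$; since $1\le S+2\le 3$ under \textbf{BA}, one has $\tfrac{1}{3}|{\rm Sic}|^2\le f\le|{\rm Sic}|^2$ and $f^2\ge\tfrac{1}{9}|{\rm Sic}|^4$ pointwise. This $-2f^2$ is what does all the work. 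After Cauchy--Schwarz $4\,{\rm Sm}({\rm Sic},{\rm Sic})/(S+2)\le\tfrac{4}{\epsilon^2}f^2+\epsilon^2|{\rm Sm}|^2$ and the Gauss--Bonnet--Chern bound (3.13) on $\int_M|{\rm Sm}|^2$ (applied \emph{statically} at each time, not differentiated), one chooses $\epsilon=2$ so that half of $-2f^2$ absorbs $\tfrac{4}{\epsilon^2}f^2$, while the leftover $4\epsilon^2|{\rm Sic}|^2=4\epsilon^2(S+2)f\le12\epsilon^2f$ is merely linear in $f$. This yields (Theorem 3.4) the clean inequality $\frac{d}{dt}\int_M f\le\int_M(-f^2+88f)+K(t)$, and a single Gronwall step gives both (1.12) and (1.13) simultaneously, with exactly the constants in (1.14).

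Your route cannot close as written. First, the evolution of $\int_M|{\rm Sic}|^2$ has no negative $-\int|{\rm Sic}|^4$ term; its good term $-2\int|\nabla{\rm Sic}|^2$ is of a different type and cannot absorb cubic curvature integrals at the critical scaling. Second, after decomposing ${\rm Rm}$, the Ricci part of the cubic integrand still produces $\int|{\rm Sic}|^3$-type terms, not only the Weyl term you single out; these are not bounded by $\Lambda\int|{\rm Sic}|^2+K$. Third, your proposed fix---differentiating Gauss--Bonnet--Chern in time to manufacture a cancellation identity---is not Simon's mechanism and does not deliver the required cancellation; in the paper GBC is used only to convert $\int|{\rm Sm}|^2$ into $\int|{\rm Sic}|^2$ plus bounded terms at each fixed time. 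Finally, your appeal to a uniform Sobolev constant from \textbf{BA} via Theorem \ref{t2.3} is unjustified (that theorem is an extension dichotomy, not a Sobolev estimate), and in fact no Sobolev inequality is used anywhere in the actual proof.
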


Define
\begin{eqnarray}
c(M,g(0),\phi(0),T)&:=&9e^{90T}\bigg[\int_{M}|{\rm Sic}_{g(0)}|^{2}_{g(0)}dV_{g(0)}
+\pi^{2}|\chi(M)|\nonumber\\
&&+ \ [(\alpha A_{1})^{2}+(\alpha A_{1})]{\rm Vol}(M,g(0))\bigg].\label{1.15}
\end{eqnarray}
Then $|b(M,g(0),\phi(0),T)|\leq c(M,g(0),\phi(0),T)$. Theorem \ref{t1.3} now yields

\begin{theorem}\label{t1.4} If $(M,g(t),\phi(t))_{t\in[0,T)}$ satisfies {\bf BA}, then
\begin{eqnarray}
\sup_{t\in[0,T)}\int_{M}|{\rm Sic}_{g(t)}|^{2}_{g(t)}dV_{g(t)}
&\leq&c(M,g(0),\phi(0),T) \ \ < \ \ +\infty,\label{1.16}\\
\sup_{t\in[0,T)}\int_{M}|{\rm Sm}_{g(t)}|^{2}_{g(t)}dV_{g(t)}
&\leq&32\pi^{2}\chi(M)+8c(M,g(0),\phi(0),T)\label{1.17}\\
&&+ \ 13(\alpha A_{1})^{2}{\rm Vol}(M,g(0))e^{2T} \ \ < \ \ +\infty.\nonumber
\end{eqnarray}
\end{theorem}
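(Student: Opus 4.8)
The plan is to handle the two estimates separately: inequality (\ref{1.16}) is essentially a repackaging of Theorem \ref{t1.3}, whereas (\ref{1.17}) is where the Gauss--Bonnet--Chern theorem in dimension four does the real work, fed by (\ref{1.16}).

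First I would prove (\ref{1.16}). By (\ref{1.12}), for every $s\in[0,T)$ we have $\int_M|{\rm Sic}_{g(s)}|^2_{g(s)}\,dV_{g(s)}\le b(M,g(0),\phi(0),s)$, so it suffices to check that $b(M,g(0),\phi(0),s)\le c(M,g(0),\phi(0),T)$ for all $s<T$. Comparing (\ref{1.14}) with (\ref{1.15}) term by term, the common prefactor $9e^{90T}$ in $c$ dominates each of the exponentials $e^{88s}$, $e^{90s}$, $e^{2s}$ appearing in $b$ (since $s<T$), while replacing $\chi(M)$ by $|\chi(M)|$ in $c$ absorbs any sign ambiguity from the Euler-characteristic term. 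This gives the uniform bound, hence (\ref{1.16}).

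The substance is (\ref{1.17}). Here I would start from the Gauss--Bonnet--Chern formula in dimension four, $32\pi^2\chi(M)=\int_M(|{\rm Rm}_{g(t)}|^2-4|{\rm Ric}_{g(t)}|^2+R^2_{g(t)})\,dV_{g(t)}$, which is a topological invariant independent of $t$. Next I would pass from the genuine curvature quantities to the shifted ones: using ${\rm Sic}={\rm Ric}-\alpha\nabla\phi\otimes\nabla\phi$ and $S=R-\alpha|\nabla\phi|^2$, the difference ${\rm Rm}-{\rm Sm}$ is built from $\alpha\nabla\phi\otimes\nabla\phi$ through the Kulkarni--Nomizu product with the metric and is therefore pointwise controlled by $\alpha A_1$ (via (\ref{3.6})), while the Weyl part $W_{g(t)}$ is unchanged. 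Expanding $|{\rm Sm}_{g(t)}|^2$ into $|W_{g(t)}|^2$, the traceless shifted Ricci term and the shifted scalar term, and substituting the Gauss--Bonnet expression for $\int_M|W_{g(t)}|^2$, I would collect the result into (i) the topological term $32\pi^2\chi(M)$, (ii) a multiple of $\int_M|{\rm Sic}_{g(t)}|^2$, controlled by $8c(M,g(0),\phi(0),T)$ through (\ref{1.16}) after Young's inequality on the cross terms, and (iii) the remaining terms, all quadratic or linear in $\alpha\nabla\phi\otimes\nabla\phi$ together with $\int_M S^2_{g(t)}$. For (iii) I would invoke condition (d) of {\bf BA}, namely $|S_{g(t)}|\le1$, so that $S^2_{g(t)}\le1$ and the $\phi$-terms are bounded by $(\alpha A_1)^2$ pointwise; integrating produces a constant times $(\alpha A_1)^2{\rm Vol}(M,g(t))$, and the volume is estimated from $\partial_t\,dV_{g(t)}=-S_{g(t)}\,dV_{g(t)}$ with $|S_{g(t)}|\le1$, giving ${\rm Vol}(M,g(t))\le e^{2T}{\rm Vol}(M,g(0))$. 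Assembling these yields the right-hand side of (\ref{1.17}).

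The main obstacle is the bookkeeping in step (iii): tracking the cross terms between $W_{g(t)}$, ${\rm Sic}_{g(t)}$ and $\alpha\nabla\phi\otimes\nabla\phi$ so that the coefficient of $\int_M|{\rm Sic}_{g(t)}|^2$ comes out to exactly $8$ and the coefficient of $(\alpha A_1)^2{\rm Vol}(M,g(0))e^{2T}$ comes out to exactly $13$, rather than merely to \emph{some} constant. This requires choosing the Young-inequality weights carefully and keeping the orthogonality of the Weyl / traceless-Ricci / scalar decomposition explicit; the topological term $32\pi^2\chi(M)$ must survive untouched, which forces one to route every estimate through the shifted quantities without perturbing the Gauss--Bonnet integrand itself.
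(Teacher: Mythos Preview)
Your treatment of (\ref{1.16}) is correct and is exactly what the paper does: it simply observes that $|b(M,g(0),\phi(0),s)|\le c(M,g(0),\phi(0),T)$ term by term and applies (\ref{1.12}).

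For (\ref{1.17}) your overall plan---feed Gauss--Bonnet--Chern with the $|{\rm Sic}|^{2}$ bound from (\ref{1.16})---is the paper's plan, but the route you sketch through a Weyl decomposition of ${\rm Sm}$ has a real obstacle. The tensor ${\rm Sm}$ defined in (\ref{2.4}) does \emph{not} have the symmetries of an algebraic curvature tensor (indeed $g^{j\ell}S_{ijk\ell}\neq g^{k\ell}S_{ijk\ell}$, as is seen in the proof of Lemma~\ref{l3.1}), so there is no orthogonal Weyl/traceless-Ricci/scalar splitting of ${\rm Sm}$, and the difference ${\rm Rm}-{\rm Sm}$ is not a Kulkarni--Nomizu product. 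Your step ``expanding $|{\rm Sm}|^{2}$ into $|W|^{2}$, the traceless shifted Ricci term and the shifted scalar term'' therefore does not go through as stated, and this is precisely where you expect to have to track the constants $8$ and $13$.

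The paper avoids this entirely by working with the Gauss--Bonnet integrand itself rather than with a decomposition of ${\rm Sm}$. Lemma~\ref{l3.1} records the pointwise algebraic identity
\[
|{\rm Rm}|^{2}-4|{\rm Ric}|^{2}+R^{2}
=|{\rm Sm}|^{2}-4|{\rm Sic}|^{2}+S^{2}
-\tfrac{13}{2}\alpha^{2}|\nabla\phi|^{4}
-9\alpha\,{\rm Sic}(\nabla\phi,\nabla\phi)+2\alpha S|\nabla\phi|^{2},
\]
obtained by brute-force expansion. A single Young inequality on the cross term $9\alpha\,{\rm Sic}(\nabla\phi,\nabla\phi)$ with weight $\epsilon=9/(26\alpha)$ converts this into inequality (\ref{3.13}), from which one reads off directly
\[
\int_{M}|{\rm Sm}|^{2}\,dV_{g(t)}\le 32\pi^{2}\chi(M)+13(\alpha A_{1})^{2}{\rm Vol}(M,g(0))e^{2t}+\tfrac{185}{26}\int_{M}|{\rm Sic}|^{2}\,dV_{g(t)},
\]
using $|\nabla\phi|^{4}\le A_{1}|\nabla\phi|^{2}$ together with (\ref{3.8}) (this is where the factor $e^{2t}$ comes from, not from a volume bound). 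Since $185/26<8$, combining with (\ref{1.16}) gives (\ref{1.17}). So the constants $8$ and $13$ are not delicate at all once you compute the right algebraic identity; the ``main obstacle'' you anticipate disappears.
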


\begin{remark}\label{r1.5} For a time-dependent tensor field, we always omit time variable in its
components. Although in Theorem \ref{t1.2}--Theorem \ref{t1.4}, we assume that $\alpha(t)\equiv\alpha$ is a positive constant, the same results also hold for $\alpha(t)>0$ and $\dot{\alpha}(t)<0$ (see Section \ref{section4}).
\end{remark}

The proof of Theorem \ref{t1.1} is based on a ``curvature pinching estimate'' for RHF (see Theorem \ref{t2.2}). The new ingredient in the proof of Theorem \ref{t2.2} is an introduction
of ``Riemann curvature tensor'' ${\rm Sm}_{g(t)}$ for RHF, so that we can
express the Weyl tensor $W_{g(t)}$ in terms of ${\rm Sm}_{g(t)}$.

The proofs of Theorem \ref{t1.2}--Theorem \ref{t1.4} follow from the
method of Simon \cite{Simon2015}. As in \cite{Simon2015} we define
\begin{equation*}
Z_{ijk}:=\left(\nabla_{i}S_{jk}\right)(S_{g(t)}+C)
-S_{jk}\left(\nabla_{i}S_{g(t)}\right), \ \ \
Z_{g(t)}:=(Z_{ijk}), \ \ \ f:=\frac{|{\rm Sic}_{g(t)}|^{2}_{g(t)}}{S_{g(t)}
+C}.
\end{equation*}
Analogous to \cite{Simon2015}, we can show that
\begin{eqnarray*}
\frac{d}{dt}\int_{M}f\!\ dV_{g(t)}
&=&\int_{M}\bigg[-2\frac{|Z|^{2}}{(S+C)^{3}}-2f^{2}
+4\frac{{\rm Sm}({\rm Sic},{\rm Sic})}{S+C}-fS\\
&&- \ 2\alpha\left|\Delta\phi\frac{{\rm Sic}}{S+C}
-\nabla^{2}\phi\right|^{2}+2\alpha|\nabla^{2}\phi|^{2}\bigg]dV_{g(t)}.
\end{eqnarray*}
The main difference is the last term on the right-hand side of the above equation. To control
the integral of $|\nabla^{2}\phi|^{2}$ we make use the evolution equation for $|\nabla\phi|^{2}$ (see
(\ref{3.6})) so that
\begin{equation*}
2\int^{t}_{0}\int_{M}|\nabla^{2}\phi|^{2}dV_{g(s)}ds
+\int_{M}|\nabla\phi|^{2}dV_{g(t)}
\leq e^{Ct}A_{1}{\rm Vol}(M,g(0)).
\end{equation*}
The above estimate not only controls the space-time integral of $|\nabla^{2}
\phi|^{2}$, but also an uniform bound for the integral of $|\nabla\phi|^{2}$. These two
estimates play essential role in the following proof. In dimension four, the famous Gauss-Bonnet-Chern
formula (\ref{3.10}) should transform to (\ref{3.13}), where the terms involving $|\nabla\phi|^{2}$ can be
bounded by the above discussion. A modification of \cite{Simon2015} is now applied to the
RHF.

\section{Curvature pinching estimate for RHF}\label{section2}

Consider a solution $(M, g(t),\phi(t))_{t\in[0,T)}$ to RHF with coupling
time-dependent constant $\alpha(t)$
\begin{equation}
\partial_{t}g(t)=-2{\rm Ric}_{g(t)}
+2\alpha(t)\nabla_{g(t)}\phi(t)\otimes\nabla_{g(t)}\phi(t), \ \ \
\partial_{t}\phi(t)=\Delta_{g(t)}\phi(t).\label{2.1}
\end{equation}
Let
\begin{equation*}
\Box_{g(t)}:=\partial_{t}-\Delta_{g(t)}.
\end{equation*}
As in \cite{List2005, List2008, Muller2009, Muller2012} we define the following notions
\begin{eqnarray}
{\rm Sic}_{g(t)}&:=&{\rm Ric}_{g(t)}-\alpha(t)\nabla_{g(t)}\phi(t)\otimes\nabla_{g(t)}\phi(t),
\label{2.2}\\
S_{g(t)}&:=&{\rm tr}_{g(t)}{\rm Ric}_{\phi(t)} \ \ = \ \ R_{g(t)}
-\alpha(t)\left|\nabla_{g(t)}\phi(t)\right|^{2}_{g(t)}.\label{2.3}
\end{eqnarray}
Here $\alpha(t)$ is a family of time-dependent functions. Motivated by RF, we introduce a ``Riemann
curvature'' type for RHF
\begin{equation}
S_{ijk\ell}:=R_{ijk\ell}-\frac{\alpha}{2}\left(g_{j\ell}
\nabla_{i}\phi\nabla_{k}\phi+g_{k\ell}\nabla_{i}\phi\nabla_{j}\phi\right).\label{2.4}
\end{equation}
Our notation for $S_{ijk\ell}$ implies that
\begin{equation*}
S_{ij}:=g^{k\ell}S_{ik\ell j}=R_{ij}
-\alpha\nabla_{i}\phi\nabla_{j}\phi=g^{k\ell}S_{kij\ell}
\end{equation*}
which coincides with the components of ${\rm Sic}_{g(t)}$. The corresponding tensor field for
$S_{ijk\ell}$ is denoted by ${\rm Sm}_{g(t)}$.

\begin{lemma}\label{l2.1} For a solution $(M, g(t),\phi(t))_{t\in[0,T)}$ with a time-dependent coupling function $\alpha(t)$, we have
\begin{eqnarray}
\Box_{g(t)}S_{g(t)}&=&2|{\rm Sic}_{g(t)}|^{2}_{g(t)}
+2\alpha(t)\left|\Delta_{g(t)}\phi(t)\right|^{2}_{g(t)}
-\dot{\alpha}(t)\left|\nabla_{g(t)}\phi(t)\right|^{2}_{g(t)},\label{2.5}\\
\Box_{g(t)}{\rm Sic}_{g(t)}&=&2{\rm Sm}_{g(t)}({\rm Sic}_{g(t)},\cdot)
-2{\rm Sic}^{2}_{g(t)}\nonumber\\
&&+ \ 2\alpha(t)\Delta_{g(t)}\phi(t)\nabla^{2}_{g(t)}\phi(t)-\dot{\alpha}(t)\nabla_{g(t)}
\phi(t)\otimes\nabla_{g(t)}\phi(t)\label{2.6}
\end{eqnarray}
where ${\rm Sic}^{2}_{g(t)}=(S_{ik}S_{j\ell}g^{k\ell})_{ij}$ and ${\rm Sm}_{g(t)}
({\rm Sic}_{g(t)},\cdot)=(S_{kij\ell}S^{k\ell})_{ij}$.
\end{lemma}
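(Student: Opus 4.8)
The plan is to apply the heat operator $\Box=\partial_t-\Delta$ to each building block of $S_{g(t)}$ and $\mathrm{Sic}_{g(t)}$ and then reassemble, writing the flow compactly as $\partial_t g_{ij}=-2S_{ij}$, so that $\partial_t g^{ij}=2S^{ij}$ for the inverse metric. I would rely on two standard inputs: the first-variation formulas for the scalar and Ricci curvature under a general deformation $\partial_t g_{ij}=v_{ij}$,
\[
\partial_t R=-\Delta(\mathrm{tr}_g v)+\nabla^i\nabla^j v_{ij}-\langle v,\mathrm{Ric}\rangle,
\]
\[
\partial_t R_{ij}=-\tfrac12\Delta_L v_{ij}+\tfrac12\nabla_i(\mathrm{div}\,v)_j+\tfrac12\nabla_j(\mathrm{div}\,v)_i-\tfrac12\nabla_i\nabla_j(\mathrm{tr}_g v),
\]
with $\Delta_L$ the Lichnerowicz Laplacian, specialized to $v_{ij}=-2S_{ij}=-2R_{ij}+2\alpha\nabla_i\phi\nabla_j\phi$; and the fact that $\partial_t$ commutes with spatial covariant differentiation on the scalar $\phi$, together with the Bochner commutator $\Delta\nabla_i\phi=\nabla_i\Delta\phi+R_{ij}\nabla^j\phi$.

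First I would dispose of the purely $\phi$-dependent pieces. From $\partial_t\phi=\Delta\phi$ one has $\partial_t\nabla_i\phi=\nabla_i\Delta\phi$, and expanding $\Delta(\nabla_i\phi\,\nabla_j\phi)$ with the Bochner commutator gives
\[
\Box(\nabla_i\phi\,\nabla_j\phi)=-R_{ik}\nabla^k\phi\,\nabla_j\phi-R_{jk}\nabla^k\phi\,\nabla_i\phi-2(\nabla^2\phi)_{ik}(\nabla^2\phi)^k{}_j.
\]
Tracing with $g^{ij}$, and adding the contribution $(\partial_t g^{ij})\nabla_i\phi\nabla_j\phi=2S^{ij}\nabla_i\phi\nabla_j\phi$ from the evolving inverse metric, the Ricci terms cancel and one is left with the clean identity $\Box|\nabla\phi|^2=-2|\nabla^2\phi|^2-2\alpha|\nabla\phi|^4$ (quoted later as (3.6)).

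Next comes the scalar curvature, which is the lighter of the two computations. Substituting $v_{ij}=-2S_{ij}$ into the variation of $R$, using the contracted Bianchi identity $\nabla^i\nabla^j R_{ij}=\tfrac12\Delta R$ and differentiating $\nabla^i\nabla^j(\nabla_i\phi\nabla_j\phi)$ with the same Bochner commutator, the Ricci-against-gradient terms cancel against those coming from $\Delta|\nabla\phi|^2$, and one obtains $\Box R=2|\mathrm{Ric}|^2-4\alpha R_{ij}\nabla^i\phi\nabla^j\phi+2\alpha|\Delta\phi|^2-2\alpha|\nabla^2\phi|^2$. Forming $\Box S=\Box R-\dot\alpha|\nabla\phi|^2-\alpha\,\Box|\nabla\phi|^2$ and substituting the previous step, the $\alpha|\nabla^2\phi|^2$ terms cancel and the remaining curvature and gradient terms collect exactly into $2|\mathrm{Ric}-\alpha\nabla\phi\otimes\nabla\phi|^2=2|\mathrm{Sic}|^2$, yielding (2.5).

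The real work, and the step I would budget the most care for, is the Ricci tensor. Feeding $v_{ij}=-2S_{ij}$ into the Lichnerowicz formula and the $\mathrm{div}\,v$ and $\nabla_i\nabla_j\mathrm{tr}\,v$ terms produces, beside the Ricci-flow part $2R_{kij\ell}R^{k\ell}-2R_{ik}R^k{}_j$, a host of terms of the shapes $\alpha R\,\nabla\phi\,\nabla\phi$, $\alpha^2(\nabla\phi)^4$, $\alpha\,\nabla^2\phi\,\nabla^2\phi$ and $\alpha\,\Delta\phi\,\nabla^2\phi$; after adding $-\dot\alpha\nabla_i\phi\nabla_j\phi-\alpha\,\Box(\nabla_i\phi\nabla_j\phi)$ coming from $\mathrm{Sic}=\mathrm{Ric}-\alpha\nabla\phi\otimes\nabla\phi$, these must reorganize into $2\,\mathrm{Sm}(\mathrm{Sic},\cdot)-2\,\mathrm{Sic}^2+2\alpha\Delta\phi\,\nabla^2\phi-\dot\alpha\nabla\phi\otimes\nabla\phi$. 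The decisive algebraic point is that, by the very definition $S_{ijk\ell}=R_{ijk\ell}-\tfrac{\alpha}{2}(g_{j\ell}\nabla_i\phi\nabla_k\phi+g_{k\ell}\nabla_i\phi\nabla_j\phi)$ together with $S_{ij}=R_{ij}-\alpha\nabla_i\phi\nabla_j\phi$, the contraction $R_{kij\ell}R^{k\ell}$ and the gradient corrections combine so that $2R_{kij\ell}R^{k\ell}-2R_{ik}R^k{}_j$ plus those corrections equals $2S_{kij\ell}S^{k\ell}-2S_{ik}S^k{}_j$ modulo the explicit term $2\alpha\Delta\phi\,\nabla^2\phi$ — precisely why $\mathrm{Sm}_{g(t)}$ was defined as it is. The main obstacle is the bookkeeping and, in particular, keeping the Riemann contraction consistent with the paper's convention $S_{ij}=g^{k\ell}S_{kij\ell}$ (so that the curvature term reads $R_{kij\ell}R^{k\ell}$, not $R_{ikj\ell}R^{k\ell}$); a single misplaced antisymmetry would corrupt the sign of the leading quadratic term. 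To guard against this I would cross-check the final contraction against the scalar identity of the previous paragraph, where the analogous reorganization must independently reproduce $2|\mathrm{Sic}|^2$.
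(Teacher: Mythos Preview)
Your approach is correct, but it is genuinely different from the paper's and considerably more labor-intensive. The paper does not derive either identity from scratch: it simply cites M\"uller's Corollary~4.5, which already gives (\ref{2.5}) as well as the intermediate form
\[
\partial_t S_{ij}=\Delta_L S_{ij}+2\alpha\,\Delta\phi\,\nabla_i\nabla_j\phi-\dot\alpha\,\nabla_i\phi\,\nabla_j\phi,
\]
with $\Delta_L$ the Lichnerowicz Laplacian. From there the only work is the one-line substitution $R_{ij}=S_{ij}+\alpha\nabla_i\phi\nabla_j\phi$ (and the analogous replacement of $R_{kij\ell}$ by $S_{kij\ell}$) in $\Delta_L S_{ij}=\Delta S_{ij}+2R_{kij\ell}S^{k\ell}-R_{ik}S_j{}^k-R_{jk}S_i{}^k$ to obtain (\ref{2.6}). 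By contrast, you begin one level lower, from the first variations of $R$ and $R_{ij}$ under a general $\partial_t g=v$, and re-derive M\"uller's corollary on the way; this buys self-containedness and a direct feel for why the $\mathrm{Sm}$ tensor is the right object, at the cost of the bookkeeping you yourself flag as the main hazard. Note also that the paper's intermediate expression already contracts $R_{kij\ell}$ against $S^{k\ell}$ rather than $R^{k\ell}$, so the algebraic reorganization into $2S_{kij\ell}S^{k\ell}-2S_{ik}S^k{}_j$ is shorter there than in your scheme.
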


\begin{proof} The first equation can be found in \cite{Muller2012}, Corollary 4.5. In the same corollary we also have
\begin{equation*}
\partial_{t}S_{ij}=\Delta_{g(t),L}S_{ij}+2\alpha\Delta_{g(t)}\phi(t)\nabla_{i}\nabla_{j}\phi
-\dot{\alpha}\nabla_{i}\phi\nabla_{j}\phi.
\end{equation*}
Here $\Delta_{g(t),L}$ denotes the Lichnerowicz Laplacian with respect to $g(t)$ defined by
\begin{equation*}
\Delta_{g(t),L}S_{ij}=\Delta_{g(t)}S_{ij}+2R_{kij\ell}S^{k\ell}
-R_{ik}S_{j}{}^{k}-R_{jk}S_{i}{}^{k}.
\end{equation*}
Then
\begin{equation*}
\Box_{g(t)}S_{ij}=2R_{kij\ell}S^{k\ell}-R_{ik}S_{j}{}^{k}-R_{jk}S_{i}{}^{k}
+2\alpha\Delta_{g(t)}\phi(t) \nabla_{i}\nabla_{j}\phi-\dot{\alpha}\nabla_{i}\phi\nabla_{j}\phi.
\end{equation*}
Plugging $S_{ij}=R_{ij}-\alpha\nabla_{i}\phi\nabla_{j}\phi$ into the above equation yields
the second desired equation.
\end{proof}

As a corollary of Lemma \ref{2.1} we have (see \cite{Muller2012}, Corollary 5.2)
\begin{equation}
\min_{M}S_{g(t)}\geq\min_{M}S_{g(0)}, \ \ \ \text{provided} \ \alpha(t)\geq0 \ \text{and} \
\dot{\alpha}(t)\leq0.\label{2.7}
\end{equation}

\begin{theorem}\label{t2.2}{\bf (Curvature pinching estimate)} Let $(M, g(t), \phi(t))_{t\in[0,T)}$ be a
solution to RHF on a closed manifold $M$ with $m=\dim M\geq3$, $T\leq+\infty$, $\alpha(t)\geq0$ and $\dot{\alpha}(t)\leq0$. There exist uniform constants $C_{1}, C_{2}, C$, depending only on $m,g(0),\phi(0),\alpha(0)$ such that
\begin{equation}
S_{g(t)}+C>0, \ \ \ \frac{|{\rm Sin}_{g(t)}|_{g(t)}}{S_{g(t)}+C}
\leq C_{1}+C_{2}\max_{M\times[0,t]}\sqrt{\frac{|W_{g(s)}|_{g(s)}+|\nabla^{2}_{g(s)}
\phi(s)|^{2}_{g(s)}}{S_{g(s)}+C}}\label{2.8}
\end{equation}
where ${\rm Sin}_{g(t)}={\rm Sic}_{g(t)}-\frac{S_{g(t)}}{m}g(t)$ is the trace-free part of
${\rm Sic}_{g(t)}$ and $W_{g(t)}$ is the Weyl tensor field of $g(t)$.
\end{theorem}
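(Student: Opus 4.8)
The plan is to prove the two parts of \eqref{2.8} separately. Positivity is immediate from the minimum principle \eqref{2.7}: since $\alpha(t)\ge0$ and $\dot\alpha(t)\le0$, one has $\min_M S_{g(t)}\ge\min_M S_{g(0)}$ for every $t$, so any $C>-\min_M S_{g(0)}$ (depending only on $g(0),\phi(0),\alpha(0)$) makes $u:=S_{g(t)}+C$ uniformly positive, indeed $u\ge\delta:=C+\min_M S_{g(0)}>0$. For the pinching inequality I would run a scalar maximum principle on the scale-adapted quotient $f:=|{\rm Sin}_{g(t)}|^2_{g(t)}/u^2$, since $\sqrt f=|{\rm Sin}_{g(t)}|_{g(t)}/(S_{g(t)}+C)$ is exactly the quantity to be estimated and, writing $G(t):=\max_{M\times[0,t]}\big(|W_{g(s)}|_{g(s)}+|\nabla^2_{g(s)}\phi(s)|^2_{g(s)}\big)/(S_{g(s)}+C)$, the desired bound is equivalent to $f\le 2C_1^2+2C_2^2\,G(t)$ after taking square roots.

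The key algebraic step---the ``new ingredient'' flagged in the introduction---is to decompose ${\rm Sm}_{g(t)}$ in Weyl--Schouten form. Inserting the standard decomposition of $R_{ijk\ell}$ into \eqref{2.4} and using $S_{ij}=R_{ij}-\alpha\nabla_i\phi\nabla_j\phi$ and $S_{g(t)}=R_{g(t)}-\alpha|\nabla\phi|^2$, I would rewrite
\[
S_{ijk\ell}=W_{ijk\ell}+\tfrac{1}{m-2}\big(g\odot{\rm Sic}_{g(t)}\big)_{ijk\ell}-\tfrac{S_{g(t)}}{(m-1)(m-2)}\big(g\odot g\big)_{ijk\ell}+E_{ijk\ell},
\]
where $g\odot\cdot$ is the Kulkarni--Nomizu product and $E$ collects the residual $\nabla\phi$-corrections produced by \eqref{2.4}. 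Contracting against ${\rm Sin}_{g(t)}\otimes{\rm Sin}_{g(t)}$ then splits the reaction term ${\rm Sm}_{g(t)}({\rm Sin}_{g(t)},{\rm Sin}_{g(t)})$ into a genuinely uncontrolled Weyl piece bounded by $c_m|W_{g(t)}|_{g(t)}\,|{\rm Sin}_{g(t)}|^2_{g(t)}$, purely algebraic self-interaction terms of size $\lesssim|{\rm Sin}_{g(t)}|^3_{g(t)}+|S_{g(t)}|\,|{\rm Sin}_{g(t)}|^2_{g(t)}$, and lower-order $\nabla\phi$-terms.

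Next I would differentiate $f$ using Lemma \ref{l2.1}. From \eqref{2.5} and $\dot\alpha\le0$ one gets $\Box_{g(t)}u\ge 2|{\rm Sic}_{g(t)}|^2_{g(t)}\ge 2|{\rm Sin}_{g(t)}|^2_{g(t)}=2fu^2$, while \eqref{2.6} together with the decomposition above gives the reaction of $\Box_{g(t)}|{\rm Sin}_{g(t)}|^2_{g(t)}$ as $4\,{\rm Sm}({\rm Sin},{\rm Sin})$ plus the cubic and $\phi$-terms, minus the good gradient term $2|\nabla{\rm Sin}_{g(t)}|^2_{g(t)}$. Using $\Box_{g(t)}f=u^{-2}\Box_{g(t)}|{\rm Sin}|^2-2fu^{-1}\Box_{g(t)}u+2fu^{-2}|\nabla u|^2+2u^{-2}\nabla f\cdot\nabla(u^2)$ and evaluating at a spatial maximum $p$ of $f$, the cross term $\nabla f\cdot\nabla(u^2)$ vanishes; moreover at $p$ one has $\nabla|{\rm Sin}|^2=2fu\,\nabla u$, so Kato's inequality $|\nabla|{\rm Sin}|^2|^2\le4|{\rm Sin}|^2|\nabla{\rm Sin}|^2$ forces $2fu^{-2}|\nabla u|^2\le 2u^{-2}|\nabla{\rm Sin}|^2$, which cancels against the good gradient term. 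The $\phi$-contributions of \eqref{2.5}--\eqref{2.6} are controlled through $|\Delta\phi|\le\sqrt m\,|\nabla^2\phi|$ and Young's inequality, yielding at most $c\,|\nabla^2\phi|^2u^{-1}f$, while the decisive favorable term is $-2fu^{-1}\Box_{g(t)}u\le-4uf^2$. Collecting everything, at $p$,
\[
\partial_t f\ \le\ c_m\,\frac{|W_{g(t)}|_{g(t)}+|\nabla^2_{g(t)}\phi|^2_{g(t)}}{u}\,f\ -\ 4uf^2\ +\ c\,u\,f^{3/2}.
\]

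It remains to run a barrier argument on $F(t):=\max_M f(\cdot,t)$. Whenever $f\ge(c/4)^2=:C_1^2$ the cubic $c\,uf^{3/2}$ is absorbed into half of $-4uf^2$, so at any time where $F>C_1^2$ and $F$ is forward-increasing one obtains $c_m G(t)\ge 2uF\ge 2\delta F$, hence $F(t)\le\max\{C_1^2,\ (c_m/2\delta)G(t)\}\le C_1^2+C_2^2\,G(t)$ with $C_2:=(c_m/2\delta)^{1/2}$; taking square roots gives \eqref{2.8}. The main obstacle is the interplay of the second and third paragraphs: one must check that the algebraic self-interaction cubic has the size (and, where its sign enters, the sign) needed for $-4uf^2$ to dominate it once $f$ is large, and that the two distinct $\phi$-sources in \eqref{2.5} and \eqref{2.6} combine into a single $|\nabla^2\phi|^2$ under the square root rather than leaving an uncontrolled $|\Delta\phi|$ or $|\nabla\phi|$ remainder; the $\nabla\phi$-terms arising when $\dot\alpha\ne0$ are strictly lower order in the curvature and are absorbed into $C_1$ using the later uniform bound $|\nabla\phi|^2\le A_1$.
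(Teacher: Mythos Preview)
Your approach is essentially the paper's: define $f=|{\rm Sin}|^{2}/(S+C)^{2}$, compute its heat evolution, decompose the reaction term ${\rm Sm}({\rm Sin},{\rm Sin})$ via the Weyl part of ${\rm Sm}_{g(t)}$, and close with the scalar maximum principle. The paper first runs the computation for $|{\rm Sin}|^{2}/(S+C)^{\gamma}$ with arbitrary $\gamma$ and only then sets $\gamma=2$, and it keeps the drift term $2\langle\nabla f,\nabla\ln(S+C)\rangle$ so as to obtain a global parabolic inequality (their (2.21)); your Kato-at-the-maximum shortcut reproduces exactly the same reaction inequality at the max point, which is all that is needed here.

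One scaling slip to fix: in your displayed inequality the Weyl/Hessian contribution to $\Box f$ should be $c_{m}\,(|W|+|\nabla^{2}\phi|^{2})\,f$, not $c_{m}\,\tfrac{|W|+|\nabla^{2}\phi|^{2}}{u}\,f$. Indeed $4\,W({\rm Sin}/u,{\rm Sin}/u)\le 4|W|\,|{\rm Sin}|^{2}/u^{2}=4|W|f$, and similarly the $\phi$-reaction from \eqref{2.6} produces $c\,|\nabla^{2}\phi|^{2}f^{1/2}$ which, once $f\ge1$, is $\le c\,|\nabla^{2}\phi|^{2}f$. With the correct scaling, dividing the inequality at the max by $uf$ gives $2f\le c_{m}\,\tfrac{|W|+|\nabla^{2}\phi|^{2}}{u}+c\,f^{1/2}$ directly, so the lower bound $u\ge\delta$ is not needed and $C_{2}$ is purely dimensional, matching the paper's conclusion from (2.21). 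Also, your ``forward-increasing'' barrier step should be phrased as: since $G(t)$ is nondecreasing in $t$, the set $\{F(t)>C_{1}^{2}+C_{2}^{2}G(t)\}$ cannot have a first time, as at that time the differential inequality forces $\partial_{t}F\le0$; otherwise the one-sided derivative argument does not immediately control $F$ at times where it is decreasing but still large.
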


\begin{proof} The first inequality follows from (\ref{2.7}). Let
\begin{equation*}
f:=\frac{|{\rm Sin}_{g(t)}|^{2}_{g(t)}}{(S_{g(t)}+C)^{\gamma}}, \ \ \ \gamma>0.
\end{equation*}
Since ${\rm Sin}_{g(t)}=({\rm Sic}_{g(t)}+\frac{C}{m}g(t))-(\frac{S_{g(t)}+C}{m})g(t)$, it follows that
\begin{equation*}
f=\frac{|{\rm Sic}_{g(t)}+\frac{C}{m}g(t)|^{2}_{g(t)}}{(S_{g(t)}+C)^{\gamma}}
-\frac{1}{m}(S_{g(t)}+C)^{2-\gamma}.
\end{equation*}
In the following we always omit the subscripts $t$ and $g(t)$ and set
\begin{equation*}
{\rm Sic}'_{g(t)}:={\rm Sic}_{g(t)}+\frac{C}{m}g(t), \ \ \
S'_{g(t)}:=S_{g(t)}+C={\rm tr}_{g(t)}{\rm Sic}'_{g(t)}.
\end{equation*}
In the following we always omit the subscripts $g(t)$ and $t$. Using the equation (3.21) in \cite{CLN2006} we have
\begin{eqnarray*}
\Box\frac{|{\rm Sic}'|^{2}}{(S')^{\gamma}}&=&\frac{1}{(S')^{\gamma}}\Box|{\rm Sic}'|^{2}
-\gamma\frac{|{\rm Sic}'|^{2}}{(S')^{\gamma+1}}\Box S-\gamma(\gamma+1)\frac{|{\rm Sic
}'|^{2}}{(S')^{\gamma+2}}|\nabla
S'|^{2}\\
&&+ \ \frac{2\gamma}{(S')^{\gamma+1}}\langle\nabla
|{\rm Sic}'|^{2},
\nabla S'\rangle.
\end{eqnarray*}
It is clear from (\ref{2.6}) that
\begin{equation*}
\Box|{\rm Sic}|^{2}=-2\left|\nabla{\rm Sic}\right|^{2}
+4{\rm Sm}\left({\rm Sic},{\rm Sic}\right)+2\bigg\langle{\rm Sic},2\alpha\Delta\phi\nabla^{2}\phi
-\dot{\alpha}\nabla\phi
\otimes\nabla\phi(t)\bigg\rangle.
\end{equation*}
Therefore
\begin{eqnarray*}
\Box|{\rm Sic}'|^{2}&=&\Box\left[|{\rm Sic}|^{2}
+\frac{C^{2}}{m}+\frac{2C}{m}S\right]\\
&=&\Box|{\rm Sic}|^{2}+\frac{2C}{m}
\left[2|{\rm Sic}|^{2}+2\alpha\left|\Delta\phi\right|^{2}
-\dot{\alpha}\left|\nabla\phi\right|^{2}\right]\\
&=&-2|\nabla{\rm Sic}'|^{2}+4{\rm Sm}({\rm Sic},{\rm Sic})+\frac{4C}{m}|{\rm Sic}|^{2}\\
&&+ \ 2\left\langle{\rm Sic}',2\alpha\Delta\phi
\nabla^{2}_\phi-\dot{\alpha}\nabla
\phi\otimes\nabla\phi\right\rangle\\
&=&-2|\nabla{\rm Sic}'|^{2}+4{\rm Sm}
({\rm Sic}',{\rm Sic}')-\frac{4C}{m}|{\rm Sic}'|^{2}
+\frac{4C^{2}}{m^{2}}(S-C)\\
&&+ \ 2\langle{\rm Sic}'
,2\alpha\Delta\phi\nabla^{2}
\phi-\dot{\alpha}\nabla\phi
\otimes\nabla\phi\rangle
\end{eqnarray*}
and
\begin{eqnarray*}
\Box\frac{|{\rm Sic}'|^{2}}{(S')^{\gamma}}
&=&-\frac{2}{(S')^{\gamma}}
|\nabla{\rm Sic}'|^{2}-
\frac{2\gamma|{\rm Sic}'|^{4}}{(S')^{\gamma+1}}+\frac{4}{(S')^{\gamma}}{\rm Sm}({\rm Sic}',
{\rm Sic}')\\
&&- \ \gamma(\gamma+1)\frac{|{\rm Sic}'|^{2}|\nabla S|^{2}}{(S')^{\gamma+2}}+\frac{2\gamma}{(S')^{\gamma+1}}\langle\nabla|{\rm Sic}'|^{2},\nabla
S'\rangle\\
&&+ \ \frac{4C^{2}}{m^{2}}\frac{S'-2C}{(S')^{\gamma}}
-\frac{4C}{m}\frac{(1+\gamma)S'-2\gamma C}{(S')^{\gamma+1}}|{\rm Sic}'|^{2}\\
&&+ \ \frac{2}{(S')^{\gamma}}\langle{\rm Sic}',
2\alpha\Delta\phi\nabla^{2}\phi-\dot{\alpha}\nabla\phi\otimes\nabla\phi\rangle\\
&&- \ \frac{\gamma|{\rm Sic}'|^{2}}{(S')^{\gamma+1}}
\left[2\alpha|\Delta\phi|^{2}-\dot{\alpha}|\nabla\phi|^{2}\right].
\end{eqnarray*}
Using the identity
\begin{equation*}
\left\langle\nabla\frac{|{\rm Sic}'|^{2}}{(S')^{\gamma}},\nabla S'\right\rangle
=
\frac{1}{(S')^{\gamma}}\langle\nabla|{\rm Sic}'|^{2},\nabla S'\rangle-\frac{\gamma}{(S')^{\gamma+1}}
|\nabla S|^{2}|{\rm Sic}'|^{2},
\end{equation*}
we arrive at
\begin{eqnarray*}
\Box\frac{|{\rm Sic}'|^{2}}{(S')^{\gamma}}
&=&\frac{2\gamma}{S'}\left\langle\nabla\frac{|{\rm Sic}'|^{2}}{(S')^{\gamma}},
\nabla S'\right\rangle-\frac{2}{(S')^{\gamma+2}}|S'\nabla{\rm Sic}'|^{2}\\
&&+ \ \frac{\gamma(\gamma-1)}{(S')^{\gamma+2}}|{\rm Sic}'|^{2}|\nabla S'|^{2}
+\frac{4}{(S')^{\gamma}}{\rm Sm}({\rm Sic}',{\rm Sic}')-\frac{2\gamma}{(S')^{\gamma+1}}
|{\rm Sic}'|^{4}\\
&&+ \ \frac{4C^{2}}{m^{2}}\frac{S'-2C}{(S')^{\gamma}}
-\frac{4C}{m}\frac{(1+\gamma)S'-2\gamma C}{(s')^{\gamma+1}}|{\rm Sic}'|^{2}\\
&&+ \ \frac{2}{(S')^{\gamma}}
\langle{\rm Sic}',2\alpha\Delta\phi\nabla^{2}\phi-\dot{\alpha}
\nabla\phi\otimes\nabla\phi\rangle\\
&&- \ \frac{\gamma|{\rm Sic}'|^{2}}{(S')^{\gamma+1}}\left[2\alpha|\Delta\phi|^{2}
-\dot{\alpha}|\nabla\phi|^{2}\right].
\end{eqnarray*}
On the other hand, the following two identities
\begin{eqnarray*}
|S'\nabla{\rm Sic}'|^{2}&=&|S'\nabla_{i}S'_{jk}-S'_{jk}\nabla_{i}S'+S'_{jk}\nabla_{i}S'|^{2}\\
&=&|S'\nabla_{i}S'_{jk}-S'_{jk}\nabla_{i}S'|^{2}-|{\rm Sic}'|^{2}|\nabla S'|^{2}
+S'\langle\nabla|{\rm Sic}'|^{2},\nabla S'\rangle,\\
\langle\nabla|{\rm Sic}'|^{2},\nabla S'\rangle&=&\left\langle\nabla\left[(S')^{\gamma}
\frac{|{\rm Sic}'|^{2}}{(S')^{\gamma}}\right],\nabla S'\right\rangle\\
&=&\frac{\gamma}{S'}|{\rm Sic}'|^{2}|\nabla S'|^{2}
+(S')^{\gamma}\left\langle\nabla\frac{|{\rm Sic}'|^{2}}{(S')^{\gamma}},\nabla S'\right\rangle
\end{eqnarray*}
implies
\begin{eqnarray*}
\Box\frac{|{\rm Sic}'|^{2}}{(S')^{\gamma}}&=&\frac{2(\gamma-1)}{S'}
\left\langle\nabla\frac{|{\rm Sic}'|^{2}}{(S')^{\gamma}},\nabla S'\right\rangle
-\frac{2}{(S')^{\gamma+2}}\left|S'\nabla_{i}S'_{jk}-S'_{jk}\nabla_{i}S'\right|^{2}\\
&&- \ \frac{(2-\gamma)(\gamma-1)}{(S')^{\gamma+2}}|{\rm Sic}'|^{2}|\nabla S'|^{2}
+\frac{4}{(S')^{\gamma}}{\rm Sm}({\rm Sic}',{\rm Sic}')
-\frac{2\gamma|{\rm Sic}'|^{4}}{(S')^{\gamma+1}}\\
&&+ \ \frac{4C^{2}}{m^{2}}\frac{S'-2C}{(S')^{\gamma}}-\frac{4C}{m}
\frac{(1+\gamma)S'-2\gamma C}{(S')^{\gamma+1}}|{\rm Sic}'|^{2}\\
&&+ \ \frac{2}{(S')^{\gamma}}\langle{\rm Sic}',
2\alpha\Delta\phi\nabla^{2}\phi-\dot{\alpha}\nabla\phi\otimes\nabla\phi\rangle\\
&&- \ \frac{\gamma|{\rm Sic}'|^{2}}{(S')^{\gamma+1}}\left[2\alpha|\Delta\phi|^{2}
-\dot{\alpha}|\nabla\phi|^{2}\right].
\end{eqnarray*}
It is clear that $\Box(S')^{2-\gamma}=(2-\gamma)(S')^{1-\gamma}\Box S'
-(2-\gamma)(1-\gamma)(S')^{-\gamma}|\nabla S'|^{2}$ and $|{\rm Sic}|^{2}=|{\rm Sic}'|^{2}
+\frac{C^{2}}{m}-\frac{2CS'}{m}$. These yield
\begin{eqnarray*}
\Box(S')^{2-\gamma}&=&\frac{2(\gamma-1)}{S'}\left\langle\nabla(S')^{2-\gamma},\nabla S'\right\rangle
+2(2-\gamma)(S')^{1-\gamma}|{\rm Sic}'|^{2}\\
&&- \ \frac{(2-\gamma)(\gamma-1)}{(S')^{2}}(S')^{2-\gamma}|\nabla S'|^{2}+(2-\gamma)(S')^{1-\gamma}
\left[\frac{2C^{2}}{m}-\frac{4C}{m}S'\right]\\
&&+ \ (2-\gamma)(S')^{1-\gamma}
\left[2\alpha|\Delta\phi|^{2}-\dot{\alpha}|\nabla\phi|^{2}\right].
\end{eqnarray*}
Then
\begin{eqnarray}
\Box f&=&\frac{2(\gamma-1)}{S'}\langle\nabla f,\nabla S'\rangle
-\frac{2}{(S')^{\gamma+2}}|S'\nabla_{i}S'_{jk}-S'_{jk}\nabla_{i}S'|^{2}\nonumber\\
&&- \ \frac{(2-\gamma)(\gamma-1)}{(S')^{2}}|\nabla S'|^{2}f+\mathscr{Q}_{1}+\mathscr{Q}_{2}
+\mathscr{Q}_{3}\label{2.9}
\end{eqnarray}
where
\begin{eqnarray}
\mathscr{Q}_{1}&:=&-\frac{2(2-\gamma)}{m}(S')^{1-\gamma}|{\rm Sic}'|^{2}
+\frac{4}{(S')^{\gamma}}{\rm Sm}({\rm Sic}',{\rm Sic'})-\frac{2\gamma|{\rm Sic}'|^{4}}{(S')^{
1+\gamma}},\label{2.10}\\
\mathscr{Q}_{2}&:=&\frac{4C}{m}\left[\frac{C(S'-2C)}{m(S')^{2}}
-\frac{(1+\gamma)S'-2\gamma C}{(S')^{\gamma+1}}|{\rm Sic}'|^{2}-\frac{2-\gamma}{2m}
\frac{C-2S'}{(S')^{\gamma-1}}\right]\label{2.11}\\
\mathscr{Q}_{3}&:=&\frac{2}{(S')^{\gamma}}\langle{\rm Sic}',2\alpha\Delta\phi\nabla^{2}\phi
-\dot{\alpha}\nabla\phi\otimes\nabla\phi\rangle\nonumber\\
&&- \ \frac{2\alpha|\Delta\phi|^{2}-\dot{\alpha}|\nabla\phi|^{2}}{(S')^{\gamma+1}}
\left[\gamma|{\rm Sic}'|^{2}+\frac{2-\gamma}{m}(S')^{2}\right].\label{2.12}
\end{eqnarray}
Observe that
\begin{equation}
\mathscr{Q}_{2}=\frac{4C}{m^{2}}\left[\frac{C}{S'}-\frac{2C^{2}}{(S')^{2}}
+\frac{(3\gamma-2)C}{2(S')^{\gamma-1}}+\frac{1-\gamma}{(S')^{\gamma-2}}
+\frac{f}{m}\left(\frac{2\gamma C}{S'}-1-\gamma\right)\right].\label{2.13}
\end{equation}
The $\mathscr{Q}_{1}$ terms can be written as
\begin{eqnarray}
\mathscr{Q}_{1}&=&\frac{2}{(S')^{\gamma+1}}\left[\frac{\gamma-2}{m}|{\rm Sic}'|^{2}
(s')^{2}+2S'{\rm Sm}({\rm Sic}',{\rm Sic}')-\gamma|{\rm Sic}'|^{4}\right]\nonumber\\
&=&\frac{2}{(S')^{\gamma+1}}\bigg[(2-\gamma)|{\rm Sic}'|^{2}|{\rm Sin}|^{2}
-2\left(|{\rm Sic}'|^{4}-S'{\rm Sm}({\rm Sic}',{\rm Sic}')\right)\bigg],\label{2.14}
\end{eqnarray}
where ${\rm Sin}={\rm Sic}-\frac{S}{m}g={\rm Sic}'-\frac{C}{m}g-\frac{S}{m}g
={\rm Sic}'-\frac{S'}{m}g=:{\rm Sin}'$. Recall the decomposition of ${\rm Rm}$ that
\begin{equation*}
R_{ijk\ell}=\frac{1}{m-2}(R_{i\ell}g_{jk}
+R_{jk}g_{i\ell}-R_{ik}g_{j\ell}-R_{j\ell}g_{ik})
-\frac{R(g_{i\ell}g_{jk}-g_{ik}g_{j\ell})}{(m-1)(m-2)}
+W_{ijk\ell}
\end{equation*}
where $W_{ijk\ell}$ stands for the Weyl tensor field. Then
\begin{eqnarray}
S_{ijk\ell}&=&W_{ijk\ell}+\frac{1}{m-2}(S'_{i\ell}g_{jk}
+S'_{jk}g_{i\ell}
-S'_{ik}g_{j\ell}-S'_{j\ell}g_{ik})-\frac{S'(g_{i\ell}g_{jk}
-g_{ik}g_{j\ell})}{(m-2)(m-1)}\nonumber\\
&&+ \ \frac{C}{m(m-1)(m-2)}(g_{i\ell}g_{jk}
-g_{ik}g_{j\ell})-\frac{C}{m(m-2)}(g_{jk}g_{i\ell}
-g_{j\ell}g_{ik})\nonumber\\
&&+ \ \frac{\alpha}{m-2}(g_{jk}\nabla_{i}\phi\nabla_{\ell}\phi
+g_{i\ell}\nabla_{j}\phi\nabla_{k}\phi-g_{j\ell}\nabla_{i}\phi\nabla_{k}\phi
-g_{ik}\nabla_{j}\phi\nabla_{\ell}\phi)\label{2.15}\\
&&- \ \frac{\alpha|\nabla\phi|^{2}}{(m-2)(m-1)}(g_{i\ell}g_{jk}
-g_{ik}g_{j\ell})-\frac{\alpha}{2}(g_{j\ell}\nabla_{i}\phi\nabla_{k}
\phi+g_{k\ell}\nabla_{i}\phi\nabla_{j}\phi).\nonumber
\end{eqnarray}
Together with (\ref{2.15}), we get
\begin{eqnarray}
{\rm Sm}({\rm Sic}',{\rm Sic}')&=&\frac{1}{m-2}
\bigg[\frac{2m-1}{m-1}S'|{\rm Sic}'|^{2}-2{\rm Sic}'{}^{3}-\frac{S'{}^{3}}{m-1}\bigg]
+W({\rm Sic}',{\rm Sic}')\nonumber\\
&&- \ \frac{1}{m-1}\left(\frac{C}{m}+\frac{\alpha}{m-2}|\nabla\phi|^{2}\right)
\left(|S'|^{2}-|{\rm Sic}'|^{2}\right)\label{2.16}\\
&&+ \ \frac{2\alpha}{m-2}\left\langle S'{\rm Sic}'-\frac{m}{2}{\rm Sic}'{}^{2},
\nabla\phi\otimes\nabla\phi\right\rangle\nonumber
\end{eqnarray}
where ${\rm Sic}'^{3}=S'_{ij}S'{}^{j}{}_{k}S'{}^{ki}$. Substituting (\ref{2.16}) into (\ref{2.14}) we arrive at
\begin{eqnarray}
\mathscr{Q}_{1}&=&\frac{2}{(S')^{\gamma+1}}\bigg[(2-\gamma)|{\rm Sic}'|^{2}|{\rm Sin}|^{2}
-2\mathscr{Q}_{4}+2S'W({\rm Sic}',{\rm Sic}')\nonumber\\
&&- \ \frac{2}{m-1}\left(\frac{C}{m}+\frac{\alpha}{m-2}|\nabla\phi|^{2}\right)
\left(S'{}^{3}-S'|{\rm Sic}'|^{2}\right)\label{2.17}\\
&&+ \ \frac{2\alpha}{m-2}\left\langle S'{}^{2}{\rm Sic}'
-\frac{m}{2}S'{\rm Sic}'{}^{2},\nabla\phi\otimes\nabla\phi\right\rangle\bigg]\nonumber
\end{eqnarray}
where
\begin{equation}
\mathscr{Q}_{4}=|{\rm Sic}'|^{4}
-\frac{S'}{m-2}\left(\frac{2m-1}{m-1}S'|{\rm Sic}'|^{2}-2{\rm Sic}'{}^{3}
-\frac{S'{}^{3}}{m-1}\right).\label{2.18}
\end{equation}
The first term
\begin{equation*}
(2-\gamma)|{\rm Sic}'|^{2}|{\rm Sin}|^{2}
-2\mathscr{Q}_{4}+2S'W({\rm Sic}',{\rm Sic}')
\end{equation*}
on the right-hand side of (\ref{2.17}) can be written as
\begin{eqnarray*}
&&-\gamma|{\rm Sin}|^{4}+\left[\frac{2(m^{2}+2m-2)}{m(m-1)(m-2)}
-\frac{\gamma}{m}\right]S'{}^{2}|{\rm Sin}|^{2}\\
&& \ \ \ \ \ \ \ \ \ \ \ \ \ \ \ \ \ \ \ \ \ \ \ \ \ \ \ \ \ \ \ \
\ \ \ \ \ \ \ \ +\frac{4}{m^{2}(m-2)}S'{}^{4}
-\frac{4}{m-2}S'{\rm Sic}'{}^{3}+2S'W({\rm Sic}',{\rm Sic}')
\end{eqnarray*}
by using the fact that $|{\rm Sic}'|^{2}=|{\rm Sin}|^{2}+\frac{S'{}^{2}}{m}$, where ${\rm Sic}'{}^{3}$ is equals to
\begin{equation*}
{\rm Sic}'{}^{3}
={\rm Sin}^{3}+\frac{3S'}{m}|{\rm Sin}|^{2}+\frac{S'{}^{3}}{m^{2}},
\end{equation*}
and therefore
\begin{equation*}
-\gamma|{\rm Sin}|^{4}+\left[\frac{2(m-2)}{m(m-1)}-\frac{\gamma}{m}\right]
S'{}^{2}|{\rm Sin}|^{2}-\frac{4}{m-2}S'{\rm Sin}^{3}
+2S'W({\rm Sin},{\rm Sin}),
\end{equation*}
because of $W({\rm Sic}',{\rm Sic}')=W({\rm Sin},{\rm Sin})$. Finally, we obtain
\begin{eqnarray}
\Box f&=&2(\gamma-1)\langle\nabla f,\nabla\ln S'\rangle
-\frac{2}{(S')^{\gamma}}|\nabla_{i}S'_{jk}-S'_{jk}\nabla_{i}\ln S'|^{2}\nonumber\\
&&- \ (2-\gamma)(\gamma-1)|\nabla\ln S'|^{2}f+\mathscr{Q}_{1}+\mathscr{Q}_{2}
+\mathscr{Q}_{3}\label{2.19}
\end{eqnarray}
where
\begin{eqnarray*}
\mathscr{Q}_{1}&=&\frac{2}{(S')^{\gamma+1}}
\bigg[-\gamma(S')^{2\gamma}f^{2}+\left(\frac{2m-4}{n(m-1)}-\frac{\gamma}{m}\right)(S')^{\gamma+2}f
-\frac{4S'{}^{4}}{m-2}\frac{\sin^{3}}{S'{}^{3}}\\
&&+ \ 2(S')^{3}W\left(\frac{{\rm Sin}}{S'},\frac{{\rm Sin}}{S'}\right)
-\frac{2}{m-1}\left(\frac{C}{m}+\frac{\alpha|\nabla\phi|^{2}}{m-2}\right)
\left(\frac{m-1}{m}S'{}^{3}
-(S')^{\gamma+1}f\right)\\
&&+ \ \frac{2\alpha}{m-2}\left\langle S'{}^{2}{\rm Sic}'-\frac{m}{2}S'{\rm Sic}'{}^{2},
\nabla\phi\otimes\nabla\phi\right\rangle\bigg],\\
\mathscr{Q}_{2}&=&\frac{4C}{m^{2}}
\bigg[\frac{C}{S'}-\frac{2C^{2}}{S'{}^{2}}+\frac{(3\gamma-2)C}{2(S')^{\gamma+1}}
+\frac{1-\gamma}{(S')^{\gamma-2}}+\frac{f}{m}\left(\frac{2\gamma C}{S'}-1-\gamma\right)\bigg],\\
\mathscr{Q}_{3}&=&\frac{2}{(S')^{\gamma}}\langle{\rm Sic}',2\alpha\Delta\phi
\nabla^{2}\phi-\dot{\alpha}\nabla\phi\otimes\nabla\phi\rangle\\
&&- \ \frac{2\alpha|\Delta\phi|^{2}-\dot{\alpha}|\nabla\phi|^{2}}{(S')^{\gamma+1}}
\left[\gamma(S')^{\gamma}f+\frac{2S'{}^{2}}{m}\right].
\end{eqnarray*}
from (\ref{2.9})--(\ref{2.13}) and (\ref{2.17})--(\ref{2.18}).

In particular, when $\gamma=2$, we have $f=|{\rm Sin}|^{2}/S'{}^{2}=|{\rm Sic}'|^{2}/S'{}^{2}$ and
\begin{equation}
\Box f=2\langle\nabla f,\nabla\ln S'\rangle-2\left|\nabla\left(\frac{{\rm Sin}}{S'}\right)\right|^{2}+\mathscr{Q}_{1}+\mathscr{Q}_{2}+\mathscr{Q}_{3}\label{2.20}
\end{equation}
where
\begin{eqnarray*}
\mathscr{Q}_{1}&=&4S'\bigg[-f^{2}-\frac{f}{m(m-1)}
-\frac{2}{m-2}\frac{{\rm Sin}^{3}}{S'{}^{3}}
+\frac{1}{S'}W\left(\frac{{\rm Sin}}{S'},\frac{{\rm Sin}}{S'}\right)\\
&&- \ \ \frac{1}{S'}\left(\frac{C}{m}+\frac{\alpha|\nabla\phi|^{2}}{m-2}\right)
\left(\frac{1}{m}-\frac{f}{m-1}\right)+\frac{1}{S'}\frac{\alpha}{m-2}\left\langle\frac{{\rm Sin}^{2}}{S'{}^{2}},
\nabla\phi\otimes\nabla\phi\right\rangle\\
&&+ \ \frac{1}{S'}\frac{\alpha|\nabla\phi|^{2}}{2m(m-2)}\bigg],\\
\mathscr{Q}_{2}&=&\frac{4C}{m^{2}}\left[\frac{C}{S'}
-\frac{2C^{2}}{S'{}^{2}}+\frac{2C}{S'{}^{3}}-1+\frac{f}{m}\left(\frac{4C}{S'}-3\right)\right],\\
\mathscr{Q}_{3}&=&\frac{2}{S'}\bigg[\langle{\rm Sin},2\alpha\Delta\phi\nabla^{2}\phi
-\dot{\alpha}\nabla\phi
\otimes\nabla\phi\rangle-\left(2\alpha|\Delta\phi|^{2}-\dot{\alpha}|\nabla\phi|^{2}\right)f\bigg].
\end{eqnarray*}
Since $\alpha(t)\geq0$ and $\dot{\alpha}(t)\leq0$, it follows from \cite{Muller2012}, Proposition 5.5,
that $|\nabla\phi|^{2}$ is bounded from above by a uniform constant. Consequently
\begin{eqnarray*}
\mathscr{Q}_{1}&\leq&4S'\left(-f^{2}-\frac{f}{m(m-1)}+\frac{2}{m-2}f^{3/2}+C_{m}\frac{|W|}{S'}f+C_{0}f\right),\\
\mathscr{Q}_{2}&\leq&4S'(C_{0}+C_{0}f),\\
\mathscr{Q}_{3}&\leq&4S'\left(C_{0}+C_{0}\frac{|\nabla^{2}\phi|^{2}}{S'}f^{1/2}\right)
\end{eqnarray*}
where $C_{m}=C(m)>0$ and $C_{0}=C(m,g(0),\phi(0),\alpha(0))>0$. Without loss of generality, we may assume that $f\geq1$. In this case, we have
\begin{eqnarray}
\Box f&\leq&2\langle\nabla f,\nabla\ln S'\rangle
+4S'f\bigg[-f-\frac{1}{m(m-1)}+\frac{2}{m-2}f^{1/2}\nonumber\\
&&+ \ C_{0}+C_{0}\frac{|W|+|\nabla^{2}\phi|^{2}}{S'}\bigg]\label{2.21}
\end{eqnarray}
Applying the maximum principle to (\ref{2.21}) yields
\begin{equation*}
f-\frac{2}{m-2}f^{1/2}+\frac{1}{m(m-1)}-C_{0}-C_{0}\frac{|W|+|\nabla^{2}\phi|^{2}}{S'}
\leq0
\end{equation*}
at the point where $f$ achieves its maximum. Thus we obtain (\ref{2.8}).
\end{proof}

As an immediate consequently of Theorem \ref{t2.2} we obtain the following theorem that is an extension
of Cao's result (\cite{Cao2011}, Corollary 3.1).

\begin{theorem}\label{t2.3} Let $(M,g(t),\phi(t))_{t\in[0,T)}$ be a solution to RHF with nonincreasing $\alpha(t)
\in[\underline{\alpha}, \overline{\alpha}]$, $0<\underline{\alpha}
\leq\overline{\alpha}<+\infty$, on a closed manifold $M$ with $m=\dim
M\geq3$ and $T<+\infty$. Either one has
\begin{equation*}
\limsup_{t\to T}\left(\max_{M}R_{g(t)}\right)=\infty
\end{equation*}
or
\begin{equation*}
\limsup_{t\to T}\left(\max_{M}R_{g(t)}\right)<\infty \ \text{but} \ \limsup_{t\to T}\left(\max_{M}\frac{|W_{g(t)}|_{g(t)}+|\nabla^{2}_{g(t)}\phi(t)|^{2}_{g(t)}}{R_{g(t)}}
\right)=\infty.
\end{equation*}
\end{theorem}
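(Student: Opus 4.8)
The plan is to argue by contradiction, reducing the dichotomy to the curvature pinching estimate of Theorem \ref{t2.2} together with the Ricci-harmonic analogue of Sesum's blow-up criterion. First I would verify that the hypotheses of Theorem \ref{t2.2} are met: since $\alpha(t)$ is nonincreasing with values in $[\underline{\alpha},\overline{\alpha}]$, we have $\alpha(t)\geq 0$ and $\dot{\alpha}(t)\leq 0$, so (\ref{2.8}) is available; moreover, by Müller \cite{Muller2012} (Proposition 5.5) the quantity $|\nabla_{g(t)}\phi(t)|^{2}_{g(t)}$ is bounded above by a uniform constant $A_{1}$. Now suppose, for contradiction, that both alternatives fail. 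The negation of the stated dichotomy is precisely that $\limsup_{t\to T}(\max_{M}R_{g(t)})<\infty$ and, simultaneously, $\limsup_{t\to T}\max_{M}\frac{|W_{g(t)}|_{g(t)}+|\nabla^{2}_{g(t)}\phi(t)|^{2}_{g(t)}}{R_{g(t)}}<\infty$.

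Next I would translate these bounds into the shifted normalization $S_{g(t)}+C$ appearing in (\ref{2.8}). Since $S_{g(t)}=R_{g(t)}-\alpha(t)|\nabla_{g(t)}\phi(t)|^{2}_{g(t)}$ with $0\leq \alpha(t)|\nabla\phi|^{2}\leq \overline{\alpha}A_{1}$, the scalars $R_{g(t)}$ and $S_{g(t)}$ differ by a uniformly bounded amount; combined with the lower bound $S_{g(t)}\geq \min_{M}S_{g(0)}$ from (\ref{2.7}), the assumed upper bound on $R$ forces $S_{g(t)}$ to be uniformly bounded above and below. Choosing $C$ as in Theorem \ref{t2.2}, I then obtain $0<c_{0}\leq S_{g(t)}+C\leq C'$ uniformly on $[0,T)$, so that $S+C$ is comparable to a positive constant. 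Consequently the assumed bound on $\frac{|W|+|\nabla^{2}\phi|^{2}}{R}$ (together with $R$ bounded above) yields a uniform bound on $|W_{g(t)}|+|\nabla^{2}_{g(t)}\phi(t)|^{2}$, and hence on $\frac{|W|+|\nabla^{2}\phi|^{2}}{S+C}$.

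Feeding this into the pinching estimate (\ref{2.8}), the right-hand side $C_{1}+C_{2}\sqrt{\frac{|W|+|\nabla^{2}\phi|^{2}}{S+C}}$ is uniformly bounded, so $\frac{|{\rm Sin}_{g(t)}|_{g(t)}}{S_{g(t)}+C}$ is uniformly bounded, and since $S+C\leq C'$ this gives a uniform bound on $|{\rm Sin}_{g(t)}|_{g(t)}$. Writing ${\rm Sic}={\rm Sin}+\frac{S}{m}g$ and ${\rm Ric}={\rm Sic}+\alpha\nabla\phi\otimes\nabla\phi$, and using the uniform bounds on $S$ and on $\alpha|\nabla\phi|^{2}$, I obtain a uniform bound on $|{\rm Ric}_{g(t)}|_{g(t)}$ over the whole interval $[0,T)$. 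But $T<\infty$, so by the Ricci-harmonic extension of Sesum's theorem due to Cheng--Zhu \cite{CZ2013}, namely (\ref{1.3}), the quantity $\max_{M}|{\rm Ric}_{g(t)}|^{2}_{g(t)}$ must blow up as $t\to T$ --- a contradiction. Therefore at least one of the two alternatives must hold.

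The main obstacle is the middle step: passing rigorously between the $R_{g(t)}$ in the statement and the shifted positive quantity $S_{g(t)}+C$ that governs (\ref{2.8}). This requires exploiting the uniform control on $|\nabla_{g(t)}\phi(t)|^{2}_{g(t)}$ and the lower bound (\ref{2.7}) to ensure that $S+C$ stays pinched between positive constants, so that the two normalizations of the ratio are comparable. Once this comparability is established, the remaining steps are a direct application of Theorem \ref{t2.2} followed by the blow-up criterion, and the dichotomy follows at once.
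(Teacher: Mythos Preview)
Your proposal is correct and follows essentially the same route as the paper: assume both alternatives fail, use the uniform bound on $|\nabla\phi|^{2}$ to pass between $R$ and $S+C$, feed the resulting bound on $(|W|+|\nabla^{2}\phi|^{2})/(S+C)$ into the pinching estimate (\ref{2.8}) to bound $|{\rm Sin}|$, and then recover a uniform Ricci bound to reach a contradiction. The only cosmetic difference is that the paper closes the argument by citing (\ref{1.2}) while you invoke Cheng--Zhu's (\ref{1.3}) directly; your formulation is in fact the cleaner one, and the normalization step you flag as the ``main obstacle'' is handled in the paper in one line just as you outline.
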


\begin{proof} Suppose now that
\begin{equation*}
\limsup_{t\to T}\left(\max_{M}R_{g(t)}\right)<\infty \ \text{and} \ \limsup_{t\to T}\left(\max_{M}\frac{|W_{g(t)}|_{g(t)}+|\nabla^{2}_{g(t)}\phi(t)|^{2}_{g(t)}}{R_{g(t)}}
\right)<\infty.
\end{equation*}
In this case both $R_{g(t)}$ and $|W_{g(t)}|_{g(t)}
+|\nabla^{2}_{g(t)}\phi(t)|^{2}_{g(t)}$ are uniformly bounded. Theorem \ref{2.2} then implies that
$|{\rm Sin}_{g(t)}|_{g(t)}$ is uniformly bounded. Since ${\rm Sin}_{g(t)}
={\rm Sic}_{g(t)}-\frac{S_{g(t)}}{m}g(t)$, it follows that ${\rm Sic}_{g(t)}$ is uniformly bounded.
However, the assumption on $\alpha(t)$ tells us that $|\nabla_{g(t)}
\phi(t)|^{2}_{g(t)}$ is uniformly bounded (e.g., \cite{Muller2012}, Proposition 5.5). Thus
${\rm Ric}_{g(t)}$ is uniformly bounded, contradicting with the fact (\ref{1.2}). Therefore we prove the theorem.
\end{proof}

\section{4D Ricci-harmonic flow with bounded $S_{g(t)}$: I}\label{section3}

Let the constant $C$ be given in Theorem \ref{t2.2} and we assume that $\alpha$ is a positive constant so
that $\dot{\alpha}(t)\equiv0$, and $m=\dim M=4$. As in \cite{Simon2015} we define
\begin{equation}
Z_{ijk}:=\left(\nabla_{i}S_{jk}\right)(S_{g(t)}+C)-S_{jk}\left(\nabla_{i}S_{g(t)}
\right), \ \ \ Z_{g(t)}:=(Z_{ijk}).\label{3.1}
\end{equation}
In the proof of Theorem \ref{t2.2}, we actually have proved
\begin{eqnarray}
\Box\frac{|{\rm Sic}|^{2}}{S+C}
&=&-2\frac{|Z|^{2}}{(S+C)^{3}}
-2\frac{|{\rm Sic}|^{4}}{(S+C)^{2}}
+4\frac{{\rm Sm}({\rm Sic},{\rm Sic})}{S
+C}\nonumber\\
&&- \ \frac{1}{(S+C)^{2}}
\bigg[\left(2\alpha|\Delta\phi|^{2}
-\dot{\alpha}|\nabla\phi|^{2}
\right)|{\rm Sic}|^{2}\label{3.2}\\
&&- \ 2(S+C)
\left\langle{\rm Sic},2\alpha\Delta\phi
\nabla^{2}\phi-\dot{\alpha}\nabla
\phi\otimes\nabla\phi\right\rangle
\bigg].\nonumber
\end{eqnarray}
The bracket in the right-hand side of (\ref{3.2}) can be expressed as
\begin{equation*}
2\alpha\left[\left|{\rm Sic}\!\ \Delta\phi-(S+C)\nabla^{2}\phi\right|^{2}
-(S+C)^{2}|\nabla^{2}\phi|^{2}\right]
\end{equation*}
because of $\dot{\alpha}\equiv0$. Therefore the identity (\ref{3.2}) is equal to
\begin{eqnarray}
\Box f&=&-2\frac{|Z|^{2}}{(S+C)^{3}}
-2f^{2}+4\frac{{\rm Sm}({\rm Sic},{\rm Sic})}{S+C}\nonumber\\
&&- \ 2\alpha\left|\Delta\phi\frac{{\rm Sic}}{S+C}
-\nabla^{2}\phi\right|^{2}+2\alpha|\nabla^{2}\phi|^{2},\label{3.3}
\end{eqnarray}
where
\begin{equation}
f:=\frac{|{\rm Sic}|^{2}}{S+C}\label{3.4}
\end{equation}
which differs from the previous one in the proof of Theorem \ref{t2.2}. Integrating (\ref{3.3})
over $M$ yields
\begin{eqnarray}
\frac{d}{dt}\int_{M}f\!\ dV_{g(t)}
&=&\int_{M}\bigg[-2\frac{|Z|^{2}}{(S+C)^{3}}
-2f^{2}+4\frac{{\rm Sm}({\rm Sic},{\rm Sic})}{S+C}-fS\nonumber\\
&&- \ 2\alpha\left|\Delta\phi\frac{{\rm Sic}}{S+C}
-\nabla^{2}\phi\right|^{2}+2\alpha|\nabla^{2}\phi|^{2}\bigg]dV_{g(t)}.\label{3.5}
\end{eqnarray}
To control the integral of $|\nabla^{2}\phi|^{2}$ we recall the evolution equation for $|\nabla\phi
|^{2}$ (see \cite{Muller2012}, Proposition 4.3):
\begin{equation}
\Box|\nabla\phi|^{2}=-2\alpha|\nabla\phi\otimes\nabla\phi|^{2}
-2|\nabla^{2}\phi|^{2}.\label{3.6}
\end{equation}
In particular, we see that
\begin{equation}
|\nabla\phi|^{2}\leq\max_{M}\left(|\nabla\phi|^{2}\bigg|_{t=0}\right)=:A_{1}.\label{3.7}
\end{equation}
Moreover we have
\begin{equation*}
\frac{d}{dt}\int_{M}|\nabla\phi|^{2}dV_{g(t)}
\leq\int_{M}\left[-(S+C)|\nabla\phi|^{2}
-2|\nabla^{2}\phi|^{2}
+C|\nabla\phi|^{2}\right]dV_{g(t)}
\end{equation*}
which shows that
\begin{equation}
2\int^{t}_{0}\int_{M}|\nabla^{2}\phi|^{2}dV_{g(s)}ds+
\int_{M}|\nabla\phi|^{2}dV_{g(t)}\leq e^{Ct}A_{1}{\rm Vol}(M,g(0)).\label{3.8}
\end{equation}
Define
\begin{equation*}
A_{2}(t):=\int_{M}|\nabla^{2}\phi|^{2}dV_{g(t)}.
\end{equation*}
Plugging (\ref{3.8}) into (\ref{3.5}) we arrive at
\begin{equation}
\frac{d}{dt}\int_{M}f\!\ dV_{g(t)}
\leq\int_{M}\left[-2f^{2}+4\frac{{\rm Sm}({\rm Sic},{\rm Sic})}{S+C}
-fS\right]dV_{g(t)}+2\alpha A_{2}(t).\label{3.9}
\end{equation}

In the following we restrict ourself in $4D$ RHF, i.e., $m=\dim M=4$. In the case, the famous Gauss-Bonnet-Chern formula says that
\begin{equation}
2^{5}\pi^{2}\chi(M)
=\int_{M}\left[|{\rm Rm}|^{2}-4|{\rm Ric}|^{2}+R^{2}\right]dV_{g(t)}\label{3.10}
\end{equation}
for any $t\in[0,T]$. In order to use the formula (\ref{3.10}) we should translate the integrand
in (\ref{3.10}) into a function in terms of $S_{ijk\ell}$.

\begin{lemma}\label{l3.1} For any $m$-dimensional manifold $M$, one has
\begin{eqnarray}
|{\rm Rm}|^{2}-4|{\rm Ric}|^{2}+R^{2}&=&|{\rm Sm}|^{2}-4|{\rm Sic}|^{2}+S^{2}\nonumber\\
&&- \ \frac{m+9}{2}\alpha^{2}|\nabla\phi|^{4}
-9\alpha{\rm Sic}(\nabla\phi,\nabla\phi)+2\alpha S|\nabla\phi|^{2}.\label{3.11}
\end{eqnarray}
\end{lemma}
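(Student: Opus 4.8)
The asserted identity is purely algebraic and pointwise: it involves no time derivatives and no covariant derivatives of curvature, only the tensors ${\rm Rm},{\rm Ric},R$, their ``$S$''-counterparts, and $\nabla\phi$. So the plan is simply to substitute the defining relations (\ref{2.2}), (\ref{2.3}) and (\ref{2.4}) into the three squared quantities on the right-hand side, expand, and collect the terms involving $\nabla\phi$. Since every manipulation is a metric contraction, once the terms cancel the identity will hold on any $m$-dimensional manifold.

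The scalar and Ricci contributions are immediate. From $S=R-\alpha|\nabla\phi|^{2}$ one gets $S^{2}=R^{2}-2\alpha R|\nabla\phi|^{2}+\alpha^{2}|\nabla\phi|^{4}$, and from ${\rm Sic}={\rm Ric}-\alpha\,\nabla\phi\otimes\nabla\phi$ one gets $|{\rm Sic}|^{2}=|{\rm Ric}|^{2}-2\alpha\,{\rm Ric}(\nabla\phi,\nabla\phi)+\alpha^{2}|\nabla\phi|^{4}$. I would also record that ${\rm Sic}(\nabla\phi,\nabla\phi)={\rm Ric}(\nabla\phi,\nabla\phi)-\alpha|\nabla\phi|^{4}$, so that the stated term $-9\alpha\,{\rm Sic}(\nabla\phi,\nabla\phi)$ can be re-expressed through ${\rm Ric}(\nabla\phi,\nabla\phi)$ together with an $\alpha^{2}|\nabla\phi|^{4}$ correction.

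The real computation is $|{\rm Sm}|^{2}$. Writing $S_{ijk\ell}=R_{ijk\ell}-T_{ijk\ell}$ with $T_{ijk\ell}:=\frac{\alpha}{2}(g_{j\ell}\nabla_{i}\phi\nabla_{k}\phi+g_{k\ell}\nabla_{i}\phi\nabla_{j}\phi)$, I expand $|{\rm Sm}|^{2}=|{\rm Rm}|^{2}-2R_{ijk\ell}T^{ijk\ell}+|T|^{2}$. In the cross term, the piece of $T$ carrying $g_{k\ell}$ contracts to zero against $R_{ijk\ell}$ by antisymmetry of ${\rm Rm}$ in its last two indices, while the piece carrying $g_{j\ell}$ gives $g^{j\ell}R_{ijk\ell}\nabla^{i}\phi\nabla^{k}\phi=-{\rm Ric}(\nabla\phi,\nabla\phi)$, where the sign and the identification of the trace are fixed by the convention implicit in (\ref{2.4}) (namely $S_{ij}=g^{k\ell}S_{ik\ell j}$). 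Hence $R_{ijk\ell}T^{ijk\ell}=-\frac{\alpha}{2}{\rm Ric}(\nabla\phi,\nabla\phi)$. For the quadratic term, contracting the two metric factors yields $|T|^{2}=\frac{\alpha^{2}(m+1)}{2}|\nabla\phi|^{4}$. Altogether $|{\rm Sm}|^{2}=|{\rm Rm}|^{2}+\alpha\,{\rm Ric}(\nabla\phi,\nabla\phi)+\frac{\alpha^{2}(m+1)}{2}|\nabla\phi|^{4}$. This is the step where one must be most careful: the index bookkeeping for the trace $g^{j\ell}R_{ijk\ell}$ and the combinatorial factor $(m+1)/2$ in $|T|^{2}$ are the only places a sign or coefficient error can enter, and it is precisely the $m$ produced here that must later be matched by the explicit $m$ in $-\frac{m+9}{2}$.

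Finally I would substitute the three expansions into the right-hand side and collect. The terms linear in ${\rm Ric}(\nabla\phi,\nabla\phi)$ carry total coefficient $\alpha+8\alpha-9\alpha=0$; the terms of the form $R|\nabla\phi|^{2}$ carry $-2\alpha+2\alpha=0$; and the terms $\alpha^{2}|\nabla\phi|^{4}$ carry $\frac{m+1}{2}-4+1+9-2-\frac{m+9}{2}=0$, the $m$-dependence cancelling exactly. What remains is $|{\rm Rm}|^{2}-4|{\rm Ric}|^{2}+R^{2}$, the left-hand side, which completes the identity. The main obstacle is thus not conceptual but the consistent tracking of the curvature convention in the $|{\rm Sm}|^{2}$ expansion; everything else is forced.
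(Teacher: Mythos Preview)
Your proof is correct and follows essentially the same approach as the paper: a direct pointwise expansion of $|{\rm Sm}|^{2}$, $|{\rm Sic}|^{2}$, $S^{2}$ in terms of $|{\rm Rm}|^{2}$, $|{\rm Ric}|^{2}$, $R^{2}$ and $\nabla\phi$, followed by collecting the $\nabla\phi$-terms. The only cosmetic difference is that the paper expands $|{\rm Rm}|^{2}$ in terms of $|{\rm Sm}|^{2}$ (computing the cross term via the traces $S_{ijk\ell}g^{j\ell}$ and $S_{ijk\ell}g^{k\ell}$ and expressing the result through ${\rm Sic}(\nabla\phi,\nabla\phi)$), whereas you go in the opposite direction using the symmetries of $R_{ijk\ell}$ and express everything through ${\rm Ric}(\nabla\phi,\nabla\phi)$; the two computations are equivalent and yield the same identity.
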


\begin{proof} Using $S_{ijk\ell}
=R_{ijk\ell}-\frac{\alpha}{2}(g_{j\ell}\nabla_{i}\phi\nabla_{k}
\phi+g_{k\ell}\nabla_{i}\phi\nabla_{j}\phi)$ we obtain
\begin{eqnarray*}
|{\rm Rm}|^{2}&=&R_{ijk\ell}R^{ijk\ell}\\
&=&\left(S_{ijk\ell}+\frac{\alpha}{2}(g_{j\ell}\nabla_{i}\phi\nabla_{k}
\phi+g_{k\ell}\nabla_{i}\phi\nabla_{j}\phi)\right)\\
&&\cdot \ \left(S^{ijk\ell}+\frac{\alpha}{2}(g^{j\ell}\nabla^{i}\phi\nabla^{k}
\phi+g^{k\ell}\nabla^{i}\phi\nabla^{j}\phi)\right)\\
&=&|{\rm Sm}|^{2}+\frac{m+1}{2}\alpha^{2}|\nabla\phi|^{4}
+\alpha\left(S_{ijk\ell}g^{j\ell}\nabla^{i}\phi\nabla^{k}\phi
+S_{ijk\ell}g^{k\ell}\nabla^{i}\phi\nabla^{j}\phi\right).
\end{eqnarray*}
Compute
\begin{eqnarray*}
S_{ijk\ell}g^{j\ell}&=&-R_{ik}-\frac{m+1}{2}\alpha\nabla_{i}
\phi\nabla_{k}\phi \ \ = \ \ -S_{ik}-\frac{m+3}{2}\alpha\nabla_{i}
\phi\nabla_{k}\phi,\\
S_{ijk\ell}g^{k\ell}&=&g^{k\ell}R_{ijk\ell}
-\frac{m+1}{2}\alpha\nabla_{i}\phi\nabla_{j}\phi \ \ = \ \ -\frac{m+1}{2}
\alpha\nabla_{i}\phi\nabla_{j}\phi
\end{eqnarray*}
because of the first Bianchi identity $g^{k\ell}R_{ijk\ell}=-g^{k\ell}(R_{jki\ell}+R_{kij\ell})
=-(-R_{ji}+R_{ij})=0$. Consequently
\begin{eqnarray*}
|{\rm Rm}|^{2}&=&|{\rm Sm}|^{2}+\frac{m+1}{2}\alpha^{2}|\nabla\phi|^{4}\\
&&+ \ \alpha\left[-{\rm Sic}(\nabla\phi,\nabla\phi)
-\frac{m+3}{2}\alpha|\nabla\phi|^{4}-\frac{m+1}{2}\alpha|\nabla\phi|^{4}\right]\\
&=&|{\rm Sm}|^{2}-\alpha{\rm Sic}(\nabla\phi,\nabla\phi)-\frac{m+3}{2}\alpha^{2}|\nabla\phi|^{4}.
\end{eqnarray*}
Similarly, we can show that
\begin{eqnarray*}
|{\rm Ric}|^{2}&=&|{\rm Sic}|^{2}+2\alpha{\rm Sic}(\nabla\phi,\nabla\phi)
+\alpha^{2}|\nabla\phi|^{4},\\
R^{2}&=&S^{2}+\alpha^{2}|\nabla\phi|^{4}+2\alpha S|\nabla\phi|^{2}.
\end{eqnarray*}
Combining those identities we obtain (\ref{3.11}).
\end{proof}

From (\ref{3.10}) and (\ref{3.11}) one has, in dimension $m=4$,
\begin{eqnarray}
\int_{M}\left[|{\rm Sm}|^{2}-4|{\rm Sic}|^{2}+S^{2}\right]dV_{g(t)}
&=&2^{5}\pi^{2}\chi(M)+\frac{13}{2}\alpha^{2}\int_{M}|\nabla\phi|^{4}dV_{g(t)}\nonumber\\
&&+ \ 9\alpha\int_{M}{\rm Sic}(\nabla\phi,\nabla\phi)dV_{g(t)}\label{3.12}\\
&&- \ 2\alpha\int_{M}S|\nabla\phi|^{2}dV_{g(t)}.\nonumber
\end{eqnarray}
Using the inequality
\begin{equation*}
{\rm Sic}(\nabla\phi,\nabla\phi)
\leq\epsilon|{\rm Sic}|^{2}+\frac{|\nabla\phi|^{4}}{4\epsilon}, \ \ \
\epsilon:=\frac{9}{26\alpha}
\end{equation*}
we obtain from (\ref{3.12}) that, in dimension $m=4$,
\begin{eqnarray}
\int_{M}\left[|{\rm Sm}|^{2}-4|{\rm Sic}|^{2}+S^{2}\right]dV_{g(t)}
&\leq&2^{5}\pi^{2}\chi(M)+13\alpha^{2}\int_{M}|\nabla\phi|^{4}dV_{g(t)}\nonumber\\
&&+ \ \frac{81}{26}\int_{M}|{\rm Sic}|^{2}dV_{g(t)}\label{3.13}\\
&&- \ 2\alpha\int_{M}S|\nabla\phi|^{2}dV_{g(t)}.\nonumber
\end{eqnarray}

For any $\epsilon>0$ we have
\begin{equation*}
4\frac{{\rm Sm}({\rm Sic}, {\rm Sic})}{S+C}\leq
\epsilon^{2}|{\rm Sm}|^{2}+\frac{4|{\rm Sic}|^{2}}{\epsilon^{2}
(S+C)^{2}}=\frac{4}{\epsilon^{2}}f^{2}+\epsilon^{2}|{\rm Sm}|^{2}
\end{equation*}
so that
\begin{eqnarray*}
&&-2f^{2}+4\frac{{\rm Sm}({\rm Sic},{\rm Sic})}{S+C}-fS \ \ \leq \ \ -2f^{2}+\frac{4}{\epsilon^{2}}f^{2}+\epsilon^{2}|{\rm Sm}|^{2}
-fS\\
&=&-2f^{2}+\epsilon^{2}\left(|{\rm Sm}|^{2}-4|{\rm Sic}|^{2}+S^{2}\right)
+4\epsilon^{2}|{\rm Sic}|^{2}-\epsilon^{2}S^{2}+\frac{4}{\epsilon^{2}}f^{2}-fs\\
&=&\epsilon^{2}\left(|{\rm Sm}|^{2}-4|{\rm Sic}|^{2}+S^{2}\right)
+(4\epsilon^{2}-1)fS-\left(2-\frac{4}{\epsilon^{2}}\right)f^{2}
+4C\epsilon^{2}f-\epsilon^{2}S^{2}.
\end{eqnarray*}
Using the estimate (\ref{3.13}) we have
\begin{eqnarray*}
&&\int_{M}\left[-2f^{2}+4\frac{{\rm Sm}({\rm Sic},{\rm Sic})}{S+C}
-fS\right]dV_{g(t)} \ \ \leq \ \ \epsilon^{2}\bigg[32\pi^{2}\chi(M)\\
&&+ \ 13\alpha^{2}A^{2}_{1}{\rm Vol}(M,g(0))e^{Ct}+\frac{81}{26}\int_{M}f(S+C)\!\ dV_{g(t)}\bigg]-\epsilon^{2}\int_{M}f^{2}\!\ dV_{g(t)}\\
&&+ \ (4\epsilon^{2}-1)\int_{M}fS\!\ dV_{g(t)}
-\left(2-\frac{4}{\epsilon^{2}}\right)\int_{M}f^{2}\!\ dV_{g(t)}
+4C\epsilon^{2}\int_{M}f\!\ dV_{g(t)}\\
&=&\epsilon^{2}\left[32\pi^{2}\chi(M)+13\alpha^{2}A^{2}_{1}{\rm Vol}(M,g(0))e^{Ct}\right]\\
&&+ \ \int_{M}\left[-\left(2-\frac{4}{\epsilon^{2}}\right)f^{2}
+\left(\frac{55}{26}+4\epsilon^{2}\right)fS+\left(\frac{81}{26}+4\epsilon^{2}\right)
Cf-\epsilon^{2}S^{2}\right]dV_{g(t)}.
\end{eqnarray*}
For any $\eta>0$ we have $fS\leq\eta f^{2}+\frac{1}{4\eta}S^{2}$ and then
\begin{eqnarray*}
&&\int_{M}\left[-2f^{2}+4\frac{{\rm Sm}({\rm Sic},{\rm Sic})}{S+C}
-fS\right]dV_{g(t)} \ \ \leq \ \ \epsilon^{2}\bigg[32\pi^{2}\chi(M)\\
&&+ \ 13\alpha^{2}A^{2}_{1}{\rm Vol}(M,g(0))e^{Ct}\bigg]
+\int_{M}\bigg[-\left(2-\frac{4}{\epsilon^{2}}
-\left(\frac{55}{26}+4\epsilon^{2}\right)\eta\right)f^{2}\\
&&+ \ \left(\frac{81}{26}+4\epsilon^{2}\right)Cf+
\left(\frac{\frac{55}{26}+4\epsilon^{2}}{4\eta}-\epsilon^{2}\right)S^{2}\bigg]dV_{g(t)}.
\end{eqnarray*}
If we choose
\begin{equation}
\eta=\frac{\frac{4}{\epsilon^{2}}}{\frac{55}{26}+4\epsilon^{2}}, \ \ \ \epsilon>2,\label{3.14}
\end{equation}
then
\begin{eqnarray*}
&&\int_{M}\left[-2f^{2}+4\frac{{\rm Sm}({\rm Sic},{\rm Sic})}{S+C}
-fS\right]dV_{g(t)} \ \ \leq \ \ \epsilon^{2}\bigg[32\pi^{2}\chi(M)\\
&&+ \ 13\alpha^{2}A^{2}_{1}{\rm Vol}(M,g(0))e^{Ct}\bigg]+\int_{M}\bigg[
-\left(2-\frac{8}{\epsilon^{2}}\right)
f^{2}+\left(\frac{81}{26}+4\epsilon^{2}\right)Cf\\
&&+ \ \left(\frac{(\frac{55}{26}+4\epsilon^{2})^{2}}{16}-1\right)\epsilon^{2}S^{2}\bigg]dV_{g(t)}.
\end{eqnarray*}
In particular, when $\epsilon=2\sqrt{2}$ in (\ref{3.14}) we arrive at
\begin{eqnarray}
&&\int_{M}\left[-2f^{2}+4\frac{{\rm Sm}({\rm Sic},{\rm Sic})}{S+C}
-fS\right]dV_{g(t)}\nonumber\\
&\leq&\int_{M}\left(-f^{2}+36Cf+574S^{2}\right)dV_{g(t)}\label{3.15}\\
&&+ \ 8\left[32\pi^{2}\chi(M)+13\alpha^{2}A^{2}_{1}{\rm Vol}(M,g(0))e^{Ct}\right].\nonumber
\end{eqnarray}
Plugging (\ref{3.15}) into (\ref{3.9}) implies
\begin{eqnarray}
\frac{d}{dt}\int_{M}f\!\ dV_{g(t)}&\leq&\int_{M}\left(-f^{2}+36Cf+574S^{2}\right)dV_{g(t)}\nonumber\\
&&+ \ \left[256\pi^{2}\chi(M)+104\alpha^{2}A^{2}_{1}{\rm Vol}(M,g(0))
e^{Ct}+2\alpha A_{2}(t)\right].\label{3.16}
\end{eqnarray}

\begin{theorem}\label{t3.2} Let $(M, g(t),\phi(t))_{t\in[0,T)}$ be a solution to RHF on a closed manifold $M$
with $m=\dim M=4$, $T\leq+\infty$, $\alpha(t)\equiv\alpha$ a positive constant. Choose a uniform constant
$C$ in
Theorem \ref{t2.2} such that $S_{g(t)}+C>0$. Then
\begin{eqnarray}
&&\int_{M}\frac{|{\rm Sic}_{g(s)}|^{2}_{g(s)}}{S_{g(s)}+C}dV_{g(s)}
+\int^{s}_{0}\int_{M}\frac{|{\rm Sic}_{g(t)}|^{4}_{g(t)}}{(S_{g(t)}+C)^{2}}dV_{g(t)}dt
\label{3.17}\\
&\leq&c_{0}(M, g(0), \phi(0), s)+574e^{36Cs}\int^{s}_{0}\int_{M}S^{2}_{g(t)}\!\ dV_{g(t)}dt,\nonumber
\end{eqnarray}
Furthermore,
\begin{eqnarray}
\int_{M}|{\rm Sic}_{g(s)}|_{g(s)}dV_{g(s)}&\leq&2c_{0}(M,g(0),\phi(0),s)
+\frac{C}{2}{\rm Vol}(M,g(s))\label{3.18}\\
&&+ \ 1148e^{36Cs}\int^{s}_{0}\int_{M}S^{2}_{g(t)}\!\ dV_{g(t)}dt,\nonumber\\
\int^{s}_{0}\int_{M}|{\rm Sic}_{g(t)}|^{2}_{g(t)}dV_{g(t)}dt&\leq&8 c_{0}(M, g(0),\phi(0),s)
+\frac{C^{2}}{4}\int^{s}_{0}{\rm Vol}(M, g(t))\!\ dt\nonumber\\
&&+ \ 4592 e^{36Cs}\int^{s}_{0}\int_{M}S^{2}_{g(t)}\!\ dV_{g(t)}dt,\label{3.19}\\
\int^{s}_{0}\int_{M}|{\rm Sm}_{g(t)}|^{2}_{g(t)}dV_{g(t)}dt&\leq&\frac{131}{50}C^{2}\int^{s}_{0}
{\rm Vol}(M, g(t))\!\ dt\nonumber\\
&&+ \ \frac{13\alpha^{2}A^{2}_{1}{\rm Vol}(M,g(0))}{C}\left(e^{Cs}-1\right)\nonumber\\
&&+ \ \frac{881}{25}c_{0}(M, g(0),\phi(0),s)+32\pi^{2}\chi(M)s\label{3.20}\\
&&+ \ \left(\frac{505694}{25}e^{36Cs}+\frac{81}{50}\right)\int^{s}_{0}
\int_{M}S^{2}_{g(t)}\!\ dV_{g(t)}dt,\nonumber
\end{eqnarray}
for all $s\in[0,T)$. Here, $A_{1}=\max_{M}|\nabla_{g(0)}\phi(0)|^{2}_{g(0)}$ and
\begin{eqnarray}
&&c_{0}(M, g(0),\phi(0),s)\nonumber\\
&=&\frac{256\pi^{2}\chi(M)}{36C}\left(e^{36Cs}-1\right)
+\frac{104\alpha^{2}A^{2}_{1}{\rm Vol}(M,g(0))}{35C}\left(e^{35Cs}
-e^{Cs}\right)\\
&&+ \ e^{37Cs}\alpha A_{1}{\rm Vol}(M, g(0))+e^{36Cs}\int_{M}\frac{|{\rm Sic}_{g(0)}|^{2}_{g(0)}}{S_{g(0)}
+C}dV_{g(0)}.\nonumber
\end{eqnarray}
Note that when $C<0$, the constant $c_{0}(M, g(0),\phi(0),s)$ can be chosen to be independent of $s$.
\end{theorem}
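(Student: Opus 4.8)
The backbone of the argument is the differential inequality (\ref{3.16}); estimate (\ref{3.17}) is the master bound, and (\ref{3.18})--(\ref{3.20}) will follow from it by elementary pointwise inequalities together with the Gauss--Bonnet identity (\ref{3.13}). Throughout write $f=|{\rm Sic}|^{2}/(S+C)$ and $F(t)=\int_{M}f\,dV_{g(t)}$, and assume first that $C>0$; the case $C<0$ only forces the exponential weights to be $\le1$ and yields the $s$-independent constants mentioned in the closing remark.

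\textbf{Step 1 (estimate (\ref{3.17})).} Reading (\ref{3.16}) as a scalar ODE inequality for $F$, the only obstruction to integrating it is the linear term $36C\,F$, which I remove with the integrating factor $e^{-36Ct}$: then $(e^{-36Ct}F)'$ is bounded by $e^{-36Ct}$ times the remaining right-hand side, a sum of the good term $-\int_{M}f^{2}$, the term $574\int_{M}S^{2}$, and the three explicit terms $256\pi^{2}\chi(M)$, $104\alpha^{2}A_{1}^{2}{\rm Vol}(M,g(0))e^{Ct}$ and $2\alpha A_{2}(t)$. Integrating over $[0,s]$ and multiplying back by $e^{36Cs}$ gives
\begin{align*}
F(s)+\int_{0}^{s}e^{36C(s-t)}\int_{M}f^{2}\,dV\,dt
&\le e^{36Cs}F(0)+574\,e^{36Cs}\int_{0}^{s}e^{-36Ct}\int_{M}S^{2}\,dV\,dt\\
&\quad+e^{36Cs}\int_{0}^{s}e^{-36Ct}\Big[256\pi^{2}\chi(M)+104\alpha^{2}A_{1}^{2}{\rm Vol}(M,g(0))e^{Ct}+2\alpha A_{2}(t)\Big]dt.
\end{align*}
Since $e^{36C(s-t)}\ge1$ and $e^{-36Ct}\le1$ on $[0,s]$ when $C>0$, the left-hand side dominates $F(s)+\int_{0}^{s}\int_{M}f^{2}$ and the $S^{2}$-integral is $\le574\,e^{36Cs}\int_{0}^{s}\int_{M}S^{2}$. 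The three explicit terms integrate in closed form: the $\chi(M)$ term gives $\frac{256\pi^{2}\chi(M)}{36C}(e^{36Cs}-1)$, the $e^{Ct}$ term gives the $A_{1}^{2}$ contribution, and for the $A_{2}$ term I use (\ref{3.8}) in the form $2\int_{0}^{s}A_{2}(t)\,dt\le e^{Cs}A_{1}{\rm Vol}(M,g(0))$ to bound it by $e^{37Cs}\alpha A_{1}{\rm Vol}(M,g(0))$; together with $e^{36Cs}F(0)$ these are exactly the four summands of $c_{0}(M,g(0),\phi(0),s)$, which proves (\ref{3.17}).

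\textbf{Step 2 (estimates (\ref{3.18}) and (\ref{3.19})).} For (\ref{3.18}) I write $|{\rm Sic}|=\sqrt{f}\,\sqrt{S+C}$ and use the weighted Young inequality $|{\rm Sic}|\le\frac{1}{2\delta}f+\frac{\delta}{2}(S+C)$. Integrating and using the dimension-four trace bound $S\le|S|\le2|{\rm Sic}|$ (so $\int_{M}S\le2\int_{M}|{\rm Sic}|$) gives $\int_{M}|{\rm Sic}|\le\frac{1}{2\delta}\int_{M}f+\delta\int_{M}|{\rm Sic}|+\frac{\delta C}{2}{\rm Vol}$; taking $\delta=\tfrac12$ lets me absorb the middle term and obtain $\int_{M}|{\rm Sic}|\le2\int_{M}f+\frac{C}{2}{\rm Vol}(M,g(s))$, into which I insert the $F(s)$-bound from (\ref{3.17}). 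For (\ref{3.19}) I start from $|{\rm Sic}|^{2}=f(S+C)$ and apply Young in the form $f(S+C)\le8f^{2}+\frac{1}{32}(S+C)^{2}$ with $(S+C)^{2}\le2S^{2}+2C^{2}$; integrating in time and inserting the $\int_{0}^{s}\int_{M}f^{2}$-bound of (\ref{3.17}) produces the stated $8c_{0}$, the $S^{2}$-double integral with constant $4592\,e^{36Cs}=8\cdot574\,e^{36Cs}$, and the volume term, the small residual $S^{2}$- and $C^{2}$-pieces being absorbed into the (generous) constants.

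\textbf{Step 3 (estimate (\ref{3.20}), the main obstacle).} Here I solve the four-dimensional Gauss--Bonnet identity (\ref{3.13}) for $\int_{M}|{\rm Sm}|^{2}$, bounding it by $32\pi^{2}\chi(M)$, a curvature term assembled from $\int_{M}|{\rm Sic}|^{2}$ and $-\int_{M}S^{2}$, the term $13\alpha^{2}\int_{M}|\nabla\phi|^{4}$, and the cross term $-2\alpha\int_{M}S|\nabla\phi|^{2}$. Integrating in time, the bound $|\nabla\phi|^{2}\le A_{1}$ from (\ref{3.7}) together with (\ref{3.8}) controls $\int_{0}^{s}\int_{M}|\nabla\phi|^{4}$ by $\frac{A_{1}^{2}{\rm Vol}(M,g(0))}{C}(e^{Cs}-1)$, giving the $\frac{13\alpha^{2}A_{1}^{2}{\rm Vol}(M,g(0))}{C}(e^{Cs}-1)$ term; the cross term is split by Young (using $|\nabla\phi|^{2}\le A_{1}$) into an $S^{2}$-piece and an $|\nabla\phi|^{4}$-piece; and the curvature term is dominated by substituting (\ref{3.19}), which is where the coefficients $\frac{881}{25}c_{0}$ and $\frac{505694}{25}e^{36Cs}=\frac{881}{25}\cdot574\,e^{36Cs}$ originate. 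I expect this step to carry all the difficulty: no individual estimate is deep, but one must propagate the numerical constants and the exponential weights through (\ref{3.13}), the Young splittings and the substitution of (\ref{3.19}) while keeping the sign of the surviving $-\int_{M}S^{2}$ term under control, so that the final $S^{2}$-coefficient is exactly $(\frac{505694}{25}e^{36Cs}+\frac{81}{50})$ and the volume coefficient is $\frac{131}{50}C^{2}$. Finally, for $C<0$ every factor $e^{36C(s-t)}$ and $e^{Ct}$ is $\le1$, so the whole chain runs with $s$-independent constants, establishing the closing remark.
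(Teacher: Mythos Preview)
Your proposal is correct and follows the paper's proof essentially step by step: integrate (\ref{3.16}) against the factor $e^{-36Ct}$ and use (\ref{3.8}) for (\ref{3.17}); derive (\ref{3.18})--(\ref{3.19}) from pointwise Young-type inequalities; and obtain (\ref{3.20}) from (\ref{3.13}) by writing $|{\rm Sic}|^{2}=f(S+C)\le f^{2}+\tfrac12(S+C)^{2}$ and substituting both (\ref{3.17}) and (\ref{3.19}).

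The one place where the paper is slightly sharper than your sketch is in Step~2 for (\ref{3.19}). You apply Young directly with coefficient $8$ on $f^{2}$, which leaves residual $\tfrac{1}{16}S^{2}$ and $\tfrac{1}{16}C^{2}$ pieces that you then ``absorb into the generous constants''. The paper instead first writes
\[
|{\rm Sic}|^{2}\le 4f^{2}+\tfrac{1}{16}(S+C)^{2}\le 4f^{2}+\tfrac18(S^{2}+C^{2}),
\]
and then uses $S^{2}\le 4|{\rm Sic}|^{2}$ to re-absorb the $S^{2}$ piece into the left-hand side, obtaining the clean pointwise bound $|{\rm Sic}|^{2}\le 8f^{2}+\tfrac{C^{2}}{4}$, which lands \emph{exactly} on the stated constants $8c_{0}$, $\tfrac{C^{2}}{4}$ and $4592e^{36Cs}$. (The same re-absorption trick is what makes (\ref{3.18}) come out exactly as $2f+\tfrac{C}{2}$, and you do carry it out there.) This is a cosmetic difference, not a gap: your inequalities are valid and dominated by the ones in the statement.
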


\begin{proof} By (\ref{3.16}) one has
\begin{eqnarray*}
\frac{d}{dt}\left(e^{-36Ct}\int_{M}f\!\ dV_{g(t)}\right)
+e^{-36Ct}\int_{M}f^{2}\!\ dV_{g(t)}&\leq&
574 e^{-36Ct}\int_{M}S^{2}\!\ dV_{g(t)}\\
&&+ \ e^{-36Ct}A_{3}
(t)
\end{eqnarray*}
where $A_{3}(t)=256\pi^{2}\chi(M)+104\alpha^{2}A^{2}_{1}{\rm Vol}(M,g(0))e^{Ct}
+2\alpha A_{2}(t)$. Therefore
\begin{eqnarray*}
&&e^{-36Cs}\int_{M}f\!\ dV_{g(s)}
+e^{-36Cs}\int^{s}_{0}\int_{M}f^{2}\!\ dV_{g(t)}dt\\
&\leq&\int^{s}_{0}A_{3}(t)e^{-36Ct}\!\ dt
+574\int^{s}_{0}e^{-36Ct}\int_{M}S^{2}\!\ dV_{g(t)}dt+\int_{M}f\!\ dV_{g(0)}\\
&\leq&\int^{s}_{0}\left[256\pi^{2}\chi(M)
+104\alpha^{2}A^{2}_{1}{\rm Vol}(M,g(0))e^{Ct}
+2\alpha\int_{M}|\nabla^{2}\phi|^{2}dV_{g(t)}\right]e^{-36Ct}dt\\
&&+ \ 574\int^{s}_{0}\int_{M}S^{2}\!\ dV_{g(t)}dt+\int_{M}f\!\ dV_{g(0)}\\
&=&\frac{256\pi^{2}\chi(M)}{36C}\left(1-e^{-36Cs}\right)
+\frac{104\alpha^{2}A^{2}_{1}{\rm Vol}(M,g(0))}{35C}\left(1-e^{-35Cs}\right)\\
&&+ \ 2\alpha\int^{s}_{0}e^{-36Ct}\int_{M}|\nabla^{2}\phi|^{2}dV_{g(t)}dt
+574\int^{s}_{0}\int_{M}S^{2}\!\ dV_{g(t)}dt+\int_{M}f\!\ dV_{g(0)}.
\end{eqnarray*}
According to the estimate (\ref{3.8}), we obtain the first inequality.

For the second statement, we use the following inequalities
\begin{equation*}
|{\rm Sic}|\leq\frac{|{\rm Sic}|^{2}}{S+C}+\frac{S+C}{4}, \ \ \
|S|\leq 2|{\rm Sic}|
\end{equation*}
so that
\begin{equation*}
|{\rm Sic}|\leq\frac{|{\rm Sic}|^{2}}{S+C}+\frac{|{\rm Sic}|}{2}+\frac{C}{4}
\end{equation*}
and
\begin{equation}
|{\rm Sic}|\leq2\frac{|{\rm Sic}|^{2}}{S+C}+\frac{C}{2}.\label{3.22}
\end{equation}
From (\ref{3.22}) and (\ref{3.17}) we obtain (\ref{3.18}).

For (\ref{3.19}), we observe that
\begin{eqnarray*}
|{\rm Sic}|^{2}&\leq&4\frac{|{\rm Sic}|^{4}}{(S+C)^{2}}+\frac{(S+C)^{2}}{16} \ \
\leq \ \ 4\frac{|{\rm Sic}|^{4}}{(S+C)^{2}}+\frac{S^{2}+C^{2}}{8}\\
&\leq&4\frac{|{\rm Sic}|^{4}}{(S+C)^{2}}+\frac{|{\rm Sic}|^{2}}{2}+\frac{C^{2}}{8}
\end{eqnarray*}
so that
\begin{equation*}
|{\rm Sic}|^{2}\leq 8\frac{|{\rm Sic}|^{4}}{(S+C)^{2}}+\frac{C^{2}}{4}.
\end{equation*}

For the last one, we use the inequality (\ref{3.13}) to deduce that
\begin{eqnarray*}
\int_{M}|{\rm Sm}|^{2}dV_{g(t)}&\leq&32\pi^{2}\chi(M)
+13\alpha^{2}A^{2}_{1}{\rm Vol}(M,g(0))e^{Ct}\\
&&+ \ \frac{81}{25}\int_{M}
\left(f^{2}+\frac{S^{2}+C^{2}}{2}\right)dV_{g(t)}+4\int_{M}|{\rm Sic}|^{2}dV_{g(t)}.
\end{eqnarray*}
Now the estimate (\ref{3.20}) follows from (\ref{3.17}) and (\ref{3.19}).
\end{proof}

\begin{theorem}\label{t3.3} Let $(M, g(t), \phi(t))_{t\in[0,T)}$ be a solution to RHF on a
closed manifold $M$ with $m=\dim M=4$, $T\leq+\infty$, $\alpha(t)\equiv\alpha$ a positive constant.
Suppose $\min_{M}S_{g(0)}>0$. Then
\begin{eqnarray}
\int_{M}|{\rm Sic}_{g(s)}|_{g(s)}dV_{g(s)}&\leq&2a_{0}(M,g(0),\phi(0),s)+1148\int^{s}_{0}\int_{M}
S^{2}_{g(t)}\!\ dV_{g(t)}dt,\label{3.23}\\
\int^{s}_{0}\int_{M}|{\rm Sic}_{g(t)}|^{2}_{g(t)}
dV_{g(t)}dt&\leq&8a_{0}(M,g(0),\phi(0),s)\nonumber\\
&&+ \ 4592\int^{s}_{0}\int_{M}S^{2}_{g(t)}dV_{g(t)}dt,\label{3.24}\\
\int^{s}_{0}\int_{M}|{\rm Sm}_{g(t)}|^{2}_{g(t)}\!\ dV_{g(t)}dt&\leq&32\pi^{2}\chi(M)s
+13(\alpha A_{1})^{2}{\rm Vol}(M,g(0))s\nonumber\\
&&+ \ \frac{881}{25}a_{0}(M, g(0),\phi(0),s)\label{3.25}\\
&&+ \ \frac{1011469}{50}\int^{s}_{0}\int_{M}S^{2}_{g(t)}\!\ dV_{g(t)}dt\nonumber
\end{eqnarray}
for all $s\in[0,T)$. Here
\begin{eqnarray}
a_{0}(M, g(0),\phi(0),s)&:=&256\pi^{2}\chi(M)s+104(\alpha A_{1})^{2}
{\rm Vol}(M,g(0))s\nonumber\\
&&+ \ \alpha A_{1}{\rm Vol}(M,g(0))+\int_{M}\frac{|{\rm Sic}_{g(0)}|^{2}_{g(0)}}{S_{g(0)}}dV_{g(0)}.
\label{3.26}
\end{eqnarray}
\end{theorem}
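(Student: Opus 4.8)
The plan is to recognize that the extra hypothesis $\min_{M}S_{g(0)}>0$ removes any need for a shift constant, so the entire mechanism of Theorem \ref{t3.2} applies verbatim with $C=0$. First I would invoke the minimum principle (\ref{2.7}): since $\alpha(t)\equiv\alpha>0$ is constant we have $\dot{\alpha}\equiv0\leq0$, whence $\min_{M}S_{g(t)}\geq\min_{M}S_{g(0)}>0$ for every $t\in[0,T)$. Thus $S_{g(t)}>0$ everywhere, and we may take $C=0$ in the definition (\ref{3.4}) of $f=|{\rm Sic}|^{2}/S$ and throughout the derivation culminating in (\ref{3.16}).

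With $C=0$ the differential inequality (\ref{3.16}) loses its $36Cf$ term and every exponential factor $e^{Ct}$ collapses to $1$. Integrating the resulting inequality over $[0,s]$, using the $C=0$ form of (\ref{3.8}), namely $2\int_{0}^{s}A_{2}(t)\,dt\leq A_{1}{\rm Vol}(M,g(0))$, together with $f|_{t=0}=|{\rm Sic}_{g(0)}|^{2}/S_{g(0)}$, I would obtain the primitive estimate
\begin{equation*}
\int_{M}f\,dV_{g(s)}+\int_{0}^{s}\int_{M}f^{2}\,dV_{g(t)}\,dt\leq a_{0}(M,g(0),\phi(0),s)+574\int_{0}^{s}\int_{M}S^{2}\,dV_{g(t)}\,dt,
\end{equation*}
with $a_{0}$ exactly as in (\ref{3.26}); this is the $C=0$ version of (\ref{3.17}).

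The three asserted inequalities then descend from the same pointwise bounds used in Theorem \ref{t3.2}, specialized to $C=0$. For (\ref{3.23}) I would combine $|{\rm Sic}|\leq\frac{|{\rm Sic}|^{2}}{S}+\frac{S}{4}$ with $S=|S|\leq2|{\rm Sic}|$ to get $|{\rm Sic}|\leq2f$ and integrate. For (\ref{3.24}) the bound $|{\rm Sic}|^{2}\leq8\frac{|{\rm Sic}|^{4}}{S^{2}}=8f^{2}$ yields the space-time $L^{2}$ estimate. For (\ref{3.25}) I would integrate the Gauss--Bonnet--Chern consequence (\ref{3.13}) (again with $C=0$) over $[0,s]$, discard the manifestly nonpositive $-\int S^{2}$ and $-2\alpha\int S|\nabla\phi|^{2}$ terms, and feed in the already-established bounds on $\int_{0}^{s}\int f^{2}$ and $\int_{0}^{s}\int|{\rm Sic}|^{2}$.

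The only genuine labor is confirming that the numerical constants assemble as stated; no new analytic idea is required. The main bookkeeping obstacle lies in (\ref{3.25}): the coefficient of $a_{0}$ emerges as $\frac{81}{25}+4\cdot8=\frac{881}{25}$, and the coefficient of the $S^{2}$ space-time integral as $\frac{81}{25}\cdot574+\frac{81}{50}+4\cdot4592=\frac{1011469}{50}$. Both demand careful arithmetic but are otherwise routine given the estimates proved for Theorem \ref{t3.2}.
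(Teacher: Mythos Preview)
Your proposal is correct and follows essentially the same route as the paper: take $C=0$ in Theorem~\ref{t3.2} (justified by the minimum principle (\ref{2.7}) and the hypothesis $\min_{M}S_{g(0)}>0$), then invoke the pointwise bounds $|{\rm Sic}|\leq 2|{\rm Sic}|^{2}/S$, $|{\rm Sic}|^{2}\leq 8|{\rm Sic}|^{4}/S^{2}$, and the inequality displayed at the end of the proof of Theorem~\ref{t3.2}. Your bookkeeping of the constants in (\ref{3.25}) matches the paper's.
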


\begin{proof} We use the fact $|{\rm Sic}|\leq2|{\rm Sic}|^{2}/S$ and Theorem \ref{t3.2} (where
we take $C=0$). Then
\begin{equation*}
\int_{M}|{\rm Sic}|dV_{g(s)}\leq2a_{0}(M,g(0),\phi(0),s)+1148\int^{s}_{0}\int_{M}S^{2}\!\
dV_{g(t)}dt.
\end{equation*}
The inequality (\ref{3.24}) follows from Theorem \ref{t3.2} and $|{\rm Sic}|^{2}
\leq 8|{\rm Sic}|^{4}/S^{2}$. The last estimate follows from the inequality mentioned at the end
of the proof of Theorem \ref{t3.2}.
\end{proof}

According to Theorem \ref{t2.3} and following \cite{Simon2015}, we consider the basic assumption
({\bf BA}) for a solution $(M, g(t),\phi(t))_{t\in[0,T)}$ to RHF:
\begin{itemize}

\item[(a)] $M$ is a connected and closed $4$-dimensional smooth manifold,

\item[(b)] $(M, g(t),\phi(t))_{t\in[0,T)}$ is a solution to RHF with $\alpha(t)\equiv\alpha$ a positive constant,

\item[(c)] $T<\infty$,

\item[(d)] $\max_{M\times[0,T)}|S_{g(t)}|\leq1$.

\end{itemize}
The upper bound $1$ in condition (d) is not essential, since we can rescale the pair
$(g(t), \phi(t))$ so that the condition (d) is always satisfied. Furthermore, since
\begin{equation*}
|\nabla_{g(t)}
\phi(t)|^{2}_{g(t)}\leq A_{1}
\end{equation*}
(by (\ref{3.6})) it follows that condition (d) is equivalent to the uniform bound for $R_{g(t)}$.

\begin{theorem}\label{t3.4} If $(M, g(t),\phi(t))_{t\in[0,T)}$ satisfies {\bf BA}, then
\begin{eqnarray}
\frac{d}{dt}\int_{M}f\!\ dV_{g(t)}&\leq&\int_{M}
\left(-f^{2}+88f\right)dV_{g(t)}\nonumber\\
&&+ \ \left[128\pi^{2}\chi(M)
+52(\alpha A_{1})^{2}{\rm Vol}(M,g(0))e^{2t}
+2\alpha A_{2}(t)\right].\label{3.27}
\end{eqnarray}
where $f:=|{\rm Sic}_{g(t)}|^{2}_{g(t)}/(S_{g(t)}+2)$.
\end{theorem}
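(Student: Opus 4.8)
The plan is to specialize the differential inequality (\ref{3.9}) to the value $C=2$ (admissible under {\bf BA}, since condition (d) gives $S_{g(t)}+2\in[1,3]$ and hence $S_{g(t)}+2>0$) and then to re-run the computation that produced (\ref{3.16}), taking advantage of the pointwise bounds $|S_{g(t)}|\leq1$ and $|\nabla_{g(t)}\phi(t)|^{2}_{g(t)}\leq A_{1}$. Concretely, I would start from
\begin{equation*}
\frac{d}{dt}\int_{M}f\,dV_{g(t)}\leq\int_{M}\left[-2f^{2}+4\frac{{\rm Sm}({\rm Sic},{\rm Sic})}{S+2}-fS\right]dV_{g(t)}+2\alpha A_{2}(t),
\end{equation*}
apply Young's inequality in the form $4\,{\rm Sm}({\rm Sic},{\rm Sic})/(S+2)\leq\epsilon^{2}|{\rm Sm}|^{2}+\tfrac{4}{\epsilon^{2}}f^{2}$, and then split $\epsilon^{2}|{\rm Sm}|^{2}=\epsilon^{2}(|{\rm Sm}|^{2}-4|{\rm Sic}|^{2}+S^{2})+4\epsilon^{2}|{\rm Sic}|^{2}-\epsilon^{2}S^{2}$ so that the Gauss--Bonnet combination appears explicitly.

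Integrating over $M$ and inserting the four-dimensional estimate (\ref{3.13}) controls the $|{\rm Sm}|^{2}$ term: the factor $\int_{M}|\nabla\phi|^{4}dV$ is bounded by $A_{1}^{2}e^{2t}{\rm Vol}(M,g(0))$ via (\ref{3.8}), and the sign-indefinite term $-2\alpha\int_{M}S|\nabla\phi|^{2}dV$ is absorbed exactly as in the passage from (\ref{3.13}) to (\ref{3.16}). At this stage the three uses of {\bf BA} enter. Since $S+2\leq3$, one has $|{\rm Sic}|^{2}=f(S+2)\leq3f$, so both the $4\epsilon^{2}|{\rm Sic}|^{2}$ term and the $\tfrac{81}{26}\epsilon^{2}|{\rm Sic}|^{2}$ term inherited from (\ref{3.13}) turn into multiples of $\int_{M}f\,dV_{g(t)}$; since $|S|\leq1$ one has $-fS\leq f$; and $-\epsilon^{2}\int_{M}S^{2}dV\leq0$ may simply be discarded.

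The decisive point is the choice of $\epsilon$. Keeping the coefficient of $-f^{2}$ at least $1$ forces $\tfrac{4}{\epsilon^{2}}-2\leq-1$, i.e. $\epsilon\geq2$, while every surviving constant is increasing in $\epsilon$, so I would take the extremal value $\epsilon=2$ (in contrast to the value $\epsilon=2\sqrt{2}$ used for the general estimate (\ref{3.16})). With $\epsilon^{2}=4$ the accumulated coefficient of $f$ is $3\cdot 4\cdot(\tfrac{81}{26}+4)+1=\tfrac{2220}{26}+1<88$, the Gauss--Bonnet constant becomes $4\cdot32\pi^{2}\chi(M)=128\pi^{2}\chi(M)$, and the gradient term becomes $4\cdot13(\alpha A_{1})^{2}e^{2t}{\rm Vol}(M,g(0))=52(\alpha A_{1})^{2}e^{2t}{\rm Vol}(M,g(0))$. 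Restoring the $2\alpha A_{2}(t)$ from (\ref{3.9}) then yields precisely (\ref{3.27}).

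The main obstacle is bookkeeping rather than conceptual: one must verify that $\epsilon=2$ is simultaneously admissible (so that the $-f^{2}$ coefficient does not degrade) and optimal, and that after replacing every $|{\rm Sic}|^{2}$ by $f(S+2)$ and using $S+2\leq3$ the linear-in-$f$ constant genuinely stays below the clean value $88$. A secondary subtlety is the term $-2\alpha\int_{M}S|\nabla\phi|^{2}dV$: under {\bf BA} it is controlled by $2\alpha\int_{M}|\nabla\phi|^{2}dV\leq2\alpha e^{2t}A_{1}{\rm Vol}(M,g(0))$ because $|S|\leq1$, and one should confirm that treating it as in the derivation of (\ref{3.16}) does not disturb the stated constants.
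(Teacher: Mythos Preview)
Your proposal is correct and follows the paper's own argument essentially line for line: start from (\ref{3.9}) with $C=2$, apply Young's inequality with parameter $\epsilon$, split off the Gauss--Bonnet--Chern combination, feed in (\ref{3.13}), convert each $|{\rm Sic}|^{2}$ into at most $3f$ via $S+2\leq3$, and set $\epsilon=2$. The paper bounds $-fS$ by $2f$ (writing $-fS=-f(S+2)+2f\leq 2f$) rather than your sharper $-fS\leq f$ from $|S|\leq1$, which is why its linear coefficient comes out as $\tfrac{555}{26}\cdot4+2=\tfrac{2272}{26}$ instead of your $\tfrac{2220}{26}+1$; both are below $88$, so the difference is cosmetic. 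Your flag on the $-2\alpha\int_{M}S|\nabla\phi|^{2}dV$ term is apt---the paper silently drops it when passing through (\ref{3.13}), so your more explicit treatment is, if anything, cleaner.
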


\begin{proof} Recall the estimate
\begin{eqnarray*}
4\frac{{\rm Sm}({\rm Sic},{\rm Sic})}{S+2}
&\leq&\frac{4|{\rm Sic}|^{4}}{\epsilon^{2}(S+2)^{2}}+
\epsilon^{2}|{\rm Sm}|^{2}\\
&\leq&\frac{4}{\epsilon^{2}}f^{2}
+\epsilon^{2}\left(|{\rm Sm}|^{2}
-4|{\rm Sic}|^{2}+S^{2}\right)+4\epsilon^{2}|{\rm Sic}|^{2}\\
&\leq&\frac{4}{\epsilon^{2}}f^{2}+\epsilon^{2}\left(|{\rm Sm}|^{2}
-4|{\rm Sic}|^{2}+S^{2}\right)
+4\epsilon^{2}(S+2)f.
\end{eqnarray*}
Since $-1\leq S\leq 1$, it follows that $4\epsilon^{2}(S+2)f\leq12\epsilon^{2}f$. Hence
\begin{equation*}
4\frac{{\rm Sm}({\rm Sic},{\rm Sic})}{S+2}
\leq\frac{4}{\epsilon^{2}}f^{2}+\epsilon^{2}
\left(|{\rm Sm}|^{2}-4|{\rm Sic}|^{2}+S^{2}\right)
+12\epsilon^{2}f.
\end{equation*}
Using the inequality $-fS=-f(s+2)+2f\leq 2f$ and (\ref{3.13}), we arrive at
\begin{eqnarray*}
&&\int_{M}\left[-2f^{2}+4\frac{{\rm Sm}({\rm Sic},{\rm Sic})}{S+2}
-fS\right]dV_{g(t)}\\
&\leq&\int_{M}\left[-2f^{2}+\frac{4}{\epsilon^{2}}f^{2}
+\epsilon^{2}\left(|{\rm Sm}|^{2}
-4|{\rm Sic}|^{2}+S^{2}\right)
+12\epsilon^{2}f-fS\right]dV_{g(t)}\\
&=&\int_{M}\left[-\left(2-\frac{4}{\epsilon^{2}}\right)
f^{2}+(12\epsilon^{2}+2)f+\epsilon^{2}\left(|{\rm Sm}|^{2}
-4|{\rm Sic}|^{2}+S^{2}\right)\right]dV_{g(t)}\\
&\leq&\int_{M}\left[-\left(2-\frac{4}{\epsilon^{2}}\right)f^{2}
+(12\epsilon^{2}+2)f\right]dV_{g(t)}\\
&&+ \ \epsilon^{2}\left[32\pi^{2}\chi(M)
+13\epsilon^{2}(\alpha A_{1})^{2}
{\rm Vol}(M,g(0))e^{2t}\right]+\frac{243}{26}\epsilon^{2}
\int_{M}f\!\ dV_{g(t)}\\
&=&\int_{M}\left[-\left(2-\frac{4}{\epsilon^{2}}\right)
f^{2}+\left(\frac{555}{26}\epsilon^{2}+2\right)f\right]dV_{g(t)}\\
&&+ \ \epsilon^{2}
\left[32\pi^{2}\chi(M)
+13\epsilon^{2}(\alpha A_{1})^{2}{\rm Vol}(M,g(0))e^{2t}\right].
\end{eqnarray*}
From (\ref{3.9}) we get (\ref{3.27}) (compare with (\ref{3.16}) when $C=2$).
\end{proof}

\begin{theorem}\label{t3.5} If $(M, g(t),\phi(t))_{t\in[0,T)}$ satisfies {\bf BA}, then
\begin{eqnarray}
\int_{M}|{\rm Sic}_{g(s)}|^{2}_{g(s)}dV_{g(s)}&\leq&b(M,g(0),\phi(0),s),\label{3.28}\\
\int_{M}|{\rm Sm}_{g(s)}|^{2}_{g(s)}dV_{g(s)}&\leq&32\pi^{2}\chi(M)
+13(\alpha A_{1})^{2}{\rm Vol}(M,g(0))e^{2s}\nonumber\\
&&+ \ \frac{185}{26}b(M,g(0),\phi(0),s),\label{3.29}\\
\int^{s}_{0}\int_{M}|{\rm Sic}_{g(t)}|^{4}_{g(t)}dV_{g(t)}dt&\leq&b(M,g(0),\phi(0),s),\label{3.30}\\
\int^{T}_{s}\int_{M}|{\rm Sic}|^{p}_{g(t)}dV_{g(t)}dt&\leq&\left[|b(M,g(0),\phi(0),T)|\right]^{
\frac{p}{4}}
e^{\frac{T(4-p)}{4}}\nonumber\\
&&\left[{\rm Vol}(M,g(0))\right]^{\frac{4-p}{4}}(T-s)^{\frac{4-p}{4}}\label{3.31}
\end{eqnarray}
for any $s\in[0,T)$ and $0<p<4$. Here
\begin{eqnarray}
b(M,g(0),\phi(0),s)&:=&9e^{88s}\int_{M}|{\rm Sic}_{g(0)}|^{2}_{g(0)}dV_{g(0)}
+\frac{1152}{88}\pi^{2}\chi(M)\left(e^{88s}-1\right)\nonumber\\
&&+ \ \frac{468}{86}(\alpha A_{1})^{2}{\rm Vol}(M,g(0))\left(e^{88s}
-e^{2s}\right)\label{3.32}\\
&&+ \ 9(\alpha A_{1}){\rm Vol}(M,g(0))e^{90s}.\nonumber
\end{eqnarray}
\end{theorem}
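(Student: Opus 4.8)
The plan is to feed the differential inequality (\ref{3.27}) of Theorem \ref{t3.4} into an integrating-factor argument that produces, in one stroke, both the pointwise-in-time bound (\ref{3.28}) and the space-time bound (\ref{3.30}). Throughout I abbreviate $u(t):=\int_M f\, dV_{g(t)}$ and $G(t):=128\pi^2\chi(M)+52(\alpha A_1)^2{\rm Vol}(M,g(0))e^{2t}+2\alpha A_2(t)$, and I record the two elementary consequences of assumption (d): since $-1\le S_{g(t)}\le 1$ one has $1\le S_{g(t)}+2\le 3$, whence
\begin{equation*}
|{\rm Sic}|^2=(S+2)f\le 3f,\qquad |{\rm Sic}|^4=(S+2)^2f^2\le 9f^2 .
\end{equation*}
The factor $9$ appearing in $b$ is exactly this conversion constant, which is why the single function $b$ can serve the $L^2$ and the $L^4$ estimate at once.

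First I would rewrite (\ref{3.27}) as $u'(t)+\int_M f^2\, dV_{g(t)}\le 88\,u(t)+G(t)$, multiply by the integrating factor $e^{-88t}$, and integrate over $[0,s]$. Keeping---crucially---the nonnegative term $\int_M f^2$, this yields
\begin{equation*}
e^{-88s}u(s)+\int_0^s e^{-88t}\!\int_M f^2\, dV_{g(t)}\,dt\ \le\ u(0)+\int_0^s e^{-88t}G(t)\,dt\ =:\ I(s).
\end{equation*}
Both terms on the left are nonnegative, so a single estimate of $I(s)$ controls both of them. Next I would bound $e^{88s}I(s)$ term by term: $u(0)\le\int_M|{\rm Sic}_{g(0)}|^2\, dV_{g(0)}$ because $S_{g(0)}+2\ge 1$; the $\chi$-term of $G$ contributes $\tfrac{128}{88}\pi^2\chi(M)(e^{88s}-1)$; the $e^{2t}$-term contributes $\tfrac{52}{86}(\alpha A_1)^2{\rm Vol}(M,g(0))(e^{88s}-e^{2s})$; and for the $2\alpha A_2(t)$-term I would invoke (\ref{3.8}) (with $C=2$), giving $\int_0^s A_2(t)\,dt\le\tfrac12 e^{2s}A_1{\rm Vol}(M,g(0))$ and hence a contribution $\le\alpha A_1{\rm Vol}(M,g(0))e^{90s}$. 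Summing shows $e^{88s}I(s)\le\tfrac19 b(M,g(0),\phi(0),s)$.

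With this in hand the four estimates fall out. For (\ref{3.28}), $\int_M|{\rm Sic}_{g(s)}|^2\le 3u(s)\le 3e^{88s}I(s)\le\tfrac13 b\le b$. For (\ref{3.30}), since $e^{88t}\le e^{88s}$ on $[0,s]$,
\begin{equation*}
\int_0^s\!\int_M|{\rm Sic}|^4\, dV_{g(t)}dt\le 9\int_0^s\!\int_M f^2\, dV_{g(t)}dt\le 9e^{88s}\!\int_0^s e^{-88t}\!\int_M f^2\, dV_{g(t)}dt\le 9e^{88s}I(s)\le b.
\end{equation*}
The estimate (\ref{3.29}) is then a direct consequence of the Gauss--Bonnet--Chern inequality (\ref{3.13}): solving it for $\int_M|{\rm Sm}|^2$ produces the curvature term $\big(\tfrac{81}{26}+4\big)\int_M|{\rm Sic}|^2=\tfrac{185}{26}\int_M|{\rm Sic}|^2\le\tfrac{185}{26}b$ via (\ref{3.28}), the term $13\alpha^2\int_M|\nabla\phi|^4\le 13(\alpha A_1)^2{\rm Vol}(M,g(0))e^{2s}$ via $|\nabla\phi|^2\le A_1$ together with (\ref{3.8}), while $-\int_M S^2\le 0$ is discarded and the lower-order $-2\alpha\int_M S|\nabla\phi|^2$ (bounded by $2\alpha A_1{\rm Vol}(M,g(0))e^{2s}$ since $S\ge-1$) is absorbed into $\tfrac{185}{26}b$. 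Finally (\ref{3.31}) is pure interpolation: H\"older with conjugate exponents $4/p$ and $4/(4-p)$ gives $\int_s^T\!\int_M|{\rm Sic}|^p\le\big(\int_s^T\!\int_M|{\rm Sic}|^4\big)^{p/4}\big(\int_s^T{\rm Vol}(M,g(t))\,dt\big)^{(4-p)/4}$, and I would bound the first factor by $|b(\cdot,T)|^{p/4}$ using (\ref{3.30}) (letting the upper limit tend to $T$) and the second using ${\rm Vol}(M,g(t))\le e^t{\rm Vol}(M,g(0))$, which follows from $\partial_t\, dV_{g(t)}=-S\, dV_{g(t)}$ and $|S|\le 1$.

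The only genuinely delicate point is the very first maneuver: one must resist discarding $-\int_M f^2$ (as one would for a naive Gronwall bound on $u$ alone) and instead carry it through the integrating factor, so that the single scalar $I(s)$ dominates $e^{-88s}u(s)$ and $\int_0^s e^{-88t}\int_M f^2$ simultaneously. Getting the bookkeeping of constants to close with the prescribed $b$---in particular matching the factor $9$ to the conversion $|{\rm Sic}|^4\le 9f^2$---is the main thing to verify; everything downstream is H\"older, (\ref{3.13}), and the volume bound.
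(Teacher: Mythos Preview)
Your argument is correct and follows the paper's proof essentially line for line: the same integrating factor $e^{-88t}$ applied to (\ref{3.27}) while retaining the $\int_M f^2$ term, the same conversion $\tfrac{1}{3}|{\rm Sic}|^2\le f\le |{\rm Sic}|^2$ from $1\le S+2\le 3$, the same appeal to (\ref{3.8}) with $C=2$ for the Hessian term, then (\ref{3.13}) for (\ref{3.29}) and H\"older plus the volume bound for (\ref{3.31}). Your packaging via $I(s)$ and the check $e^{88s}I(s)\le\tfrac19 b$ is a clean way to organize the same computation, and you are actually a bit more careful than the paper in explicitly accounting for the $-2\alpha\int_M S|\nabla\phi|^2$ contribution in (\ref{3.29}), which the paper silently drops.
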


\begin{proof} Write $A_{3}(t):=128\pi^{2}\chi(M)
+52(\alpha A_{1})^{2}{\rm Vol}(M,g(0))e^{2t}+2\alpha A_{2}(t)$. The inequality (\ref{3.27}) implies
\begin{equation*}
\frac{d}{dt}\int_{M}f\!\ dV_{g(t)}\leq A_{3}(t)+\int_{M}\left(-f^{2}+88f\right)dV_{g(t)}
\end{equation*}
and then
\begin{equation*}
\frac{d}{dt}\left(e^{-88t}\int_{M}f\!\ dV_{g(t)}\right)
\leq-e^{-88t}\int_{M}f^{2}\!\ dV_{g(t)}
+e^{-88t}A_{3}(t).
\end{equation*}
Therefore
\begin{eqnarray*}
&&e^{-88s}\int^{s}_{0}\int_{M}f^{2}\!\ dV_{g(t)}dt
+e^{-88s}\int_{M}f\!\ dV_{g(s)} \ \ \leq \ \
\int_{M}f\!\ dV_{g(0)}\\
&&+ \ \int^{s}_{0}e^{-88t}\bigg[128\pi^{2}\chi(M)
+52(\alpha A_{1})^{2}{\rm Vol}(M,g(0))e^{2t}
+2\alpha A_{2}(t)\bigg]dt\\
&=&\int_{M}f\!\ dV_{g(0)}
+\frac{128}{88}\pi^{2}\chi(M)\left(1-e^{-88s}\right)
+\frac{52}{86}
(\alpha A_{1})^{2}{\rm Vol}(M,g(0))\left(1-e^{-86s}\right)\\
&&+ \ 2\alpha\int^{s}_{0}\int_{M}|\nabla^{2}\phi|^{2}\!\ dV_{g(t)}dt\\
&\leq&\int_{M}f\!\ dV_{g(0)}+\frac{128}{88}\pi^{2}\chi(M)
\left(1-e^{-88s}\right)
+\frac{52}{86}(\alpha A_{1})^{2}{\rm Vol}(M,g(0))
\left(1-e^{-86s}\right)\\
&&+ \ (\alpha A_{1}){\rm Vol}(M,g(0))e^{2s}
\end{eqnarray*}
by (\ref{3.8}). Because $|S|\leq1$, we have $\frac{1}{3}|{\rm Sic}|^{2}\leq f\leq|{\rm Sic}|^{2}$ and hence
\begin{eqnarray*}
&&\frac{e^{-88s}}{9}\int^{s}_{0}\int_{M}|{\rm Sic}|^{4}\!\ dV_{g(t)}dt
+\frac{e^{-88s}}{3}\int_{M}|{\rm Sic}|^{2}\!\ dV_{g(s)}\\
&\leq&\int_{M}|{\rm Sic}|^{2}\!\ dV_{g(0)}+\frac{128}{88}\pi^{2}\chi(M)
\left(1-e^{-88s}\right)\\
&&+ \ \frac{52}{86}(\alpha A_{1})^{2}{\rm Vol}(M,g(0))
\left(1-e^{-86s}\right)+(\alpha A_{1}){\rm Vol}(M,g(0))e^{2s}.
\end{eqnarray*}
This estimate yields (\ref{3.28}) and (\ref{3.30}).

For (\ref{3.29}), we use (\ref{3.13}):
\begin{eqnarray*}
\int_{M}|{\rm Sm}|^{2}dV_{g(t)}
&\leq&32\pi^{2}\chi(M)+13(\alpha A_{1})^{2}{\rm Vol}(M,g(0))e^{2t}\\
&&+ \ \frac{81}{26}\int_{M}|{\rm Sic}|^{2}dV_{g(t)}+4\int_{M}|{\rm Sic}|^{2}dV_{g(t)}\\
&=&32\pi^{2}\chi(M)+13(\alpha A_{1})^{2}{\rm Vol}(M,g(0))e^{2t}
+\frac{185}{26}b(M,g(0),\phi(0),t)
\end{eqnarray*}
by (\ref{3.28}).

For (\ref{3.31}), we use
\begin{equation*}
\frac{d}{dt}{\rm Vol}(M,g(t))
=-\int_{M}S\!\ dV_{g(t)} \ \ \ \text{and} \ \ \ -1\leq S\leq 1
\end{equation*}
to deduce
\begin{equation*}
e^{-T}{\rm Vol}(M,g(0))\leq{\rm Vol}(M,g(t))\leq e^{T}{\rm Vol}(M,g(0)).
\end{equation*}
Consequently, for any $0<s<r<T$,
\begin{eqnarray*}
&&\int^{r}_{s}\int_{M}|{\rm Sic}|^{p}dV_{g(t)}dt \ \ \leq \ \ \left(\int^{r}_{s}\int_{M}|{\rm Sic}|^{4}dV_{g(t)}dt\right)^{\frac{p}{4}}
\left(\int^{r}_{s}\int_{M}dV_{g(t)}dt\right)^{\frac{4-p}{4}}\\
&\leq& \left[|b(M,g(0),\phi(0),T)|\right]^{\frac{p}{4}}
(r-s)^{\frac{4-p}{4}}e^{\frac{T(4-p)}{4}}
\left[{\rm Vol}(M,g(0))\right]^{\frac{4-p}{4}}.
\end{eqnarray*}
Thus we get (\ref{3.31}).
\end{proof}

Define
\begin{eqnarray}
c(M,g(0),\phi(0),T)&:=&9e^{90T}\bigg[\int_{M}|{\rm Sic}_{g(0)}|^{2}_{g(0)}dV_{g(0)}
+\pi^{2}|\chi(M)|\nonumber\\
&&+ \ [(\alpha A_{1})^{2}+(\alpha A_{1})]{\rm Vol}(M,g(0))\bigg].\label{3.33}
\end{eqnarray}
Then $|b(M,g(0),\phi(0),T)|\leq c(M,g(0),\phi(0),T)$. Theorem \ref{t3.5} now yields
\begin{eqnarray}
\sup_{t\in[0,T)}\int_{M}|{\rm Sic}_{g(t)}|^{2}_{g(t)}dV_{g(t)}
&\leq&c(M,g(0),\phi(0),T) \ \ < \ \ +\infty,\label{3.34}\\
\sup_{t\in[0,T)}\int_{M}|{\rm Sm}_{g(t)}|^{2}_{g(t)}dV_{g(t)}
&\leq&32\pi^{2}\chi(M)+8c(M,g(0),\phi(0),T)\label{3.35}\\
&&+ \ 13(\alpha A_{1})^{2}{\rm Vol}(M,g(0))e^{2T} \ \ < \ \ +\infty.\nonumber
\end{eqnarray}

\section{4D Ricci-harmonic flow with bounded $S_{g(t)}$: II}\label{section4}

We in this section consider the general case of $4D$ RHF $(M,g(t),\phi(t))_{t\in[0,T)}$
with $T\leq\infty$ and $\alpha(t)\geq0, \dot{\alpha}\leq0$. Using the same notions as (\ref{3.1}) and (\ref{3.4}) we have
\begin{eqnarray}
\Box f&=&-2\frac{|Z|^{2}}{(S+C)^{3}}
-2f^{2}+4\frac{{\rm Sm}({\rm Sic},{\rm Sic})}{S+C}-2\alpha\left|\Delta\phi\frac{{\rm Sic}}{S+C}
-\nabla^{2}\phi\right|^{2}\nonumber\\
&&+ \ 2\alpha|\nabla^{2}\phi|^{2}+\frac{\dot{\alpha}}{(S+C)^{2}}
|\nabla\phi|^{2}|{\rm Sic}|^{2}
+2\dot{\alpha}(S+C)
\left\langle{\rm Sic},\nabla\phi\otimes\nabla\phi\right\rangle.\label{4.1}
\end{eqnarray}
Using $f=|{\rm Sic}|^{2}/(S+C)$ we get
\begin{eqnarray*}
&&\frac{\dot{\alpha}}{(S+C)^{2}}
|\nabla\phi|^{2}|{\rm Sic}|^{2}
+2\dot{\alpha}(S+C)
\left\langle{\rm Sic},\nabla\phi\otimes\nabla\phi\right\rangle\\
&=&\frac{\dot{\alpha}}{S+C}|\nabla\phi|^{2}f-2\dot{\alpha}(S+C)
\left[\epsilon f(S+C)+\frac{1}{4\epsilon}|\nabla\phi|^{4}\right]\\
&=&-\dot{\alpha}f\left[2\epsilon(S+C)^{2}-\frac{|\nabla\phi|^{2}}{S+C}\right]
-\frac{\dot{\alpha}}{2\epsilon}(S+C)|\nabla\phi|^{4}\\
&\leq&-\dot{\alpha}(S+C)^{4}|\nabla\phi|^{2}.
\end{eqnarray*}
where $\epsilon:=|\nabla\phi|^{2}/2(S+C)^{3}$. Hence, together with (\ref{4.1}) yields
\begin{eqnarray}
\Box f&\leq&-2\frac{|Z|^{2}}{(S+C)^{3}}
-2f^{2}+4\frac{{\rm Sm}({\rm Sic},{\rm Sic})}{S+C}\nonumber\\
&&+ \ 2\alpha|\nabla^{2}\phi|^{2}-\dot{\alpha}(S+C)^{4}|\nabla\phi|^{2}.\label{4.2}
\end{eqnarray}
As in the proof of (\ref{3.9}) we arrive at
\begin{eqnarray}
\frac{d}{dt}\int_{M}f\!\ dV_{g(t)}
&\leq&\int_{M}\left[-2f^{2}+4\frac{{\rm Sm}({\rm Sic},{\rm Sic})}{S+C}
-fS\right]dV_{g(t)}\nonumber\\
&&+ \ 2\alpha A_{2}(t)-\dot{\alpha}A_{1}\int_{M}(S+C)^{4}dV_{g(t)}.\label{4.3}
\end{eqnarray}
Using Lemma \ref{l3.1} yields
\begin{eqnarray}
\frac{d}{dt}\int_{M}f\!\ dV_{g(t)}&\leq&\int_{M}\left(-f^{2}+36C f
+574 S^{2}\right)dV_{g(t)}-\dot{\alpha}(t)A_{1}\int_{M}(S+C)^{4}dV_{g(t)}\nonumber\\
&&+ \ \left[256\pi^{2}\chi(M)
+104\alpha^{2}(t)A^{2}_{1}{\rm Vol}(M,g(0))e^{Ct}+2\alpha(t) A_{2}(t)\right].\label{4.4}
\end{eqnarray}
The above estimate can help use to extend Theorem \ref{t3.2} and Theorem \ref{t3.3} to the general RHF. For 
example, we have
\begin{eqnarray}
&&\int_{M}\frac{|{\rm Sic}_{g(s)}|^{2}_{g(s)}}{S_{g(s)}+C}dV_{g(s)}
+\int^{s}_{0}\int_{M}\frac{|{\rm Sic}_{g(t)}|^{4}_{g(t)}}{(S_{g(t)}+C)^{2}}dV_{g(t)}dt\nonumber\\
&\leq&c_{0}(M, g(0),\phi(0),s)
+574 e^{36Cs}\int^{s}_{0}\int_{M}S^{2}_{g(t)}dV_{g(t)}dt\label{4.5}\\
&&+ \ e^{36Cs}\int^{s}_{0}-\dot{\alpha}(t)\int_{M}\left(S_{g(t)}+C\right)^{4}dV_{g(t)}
dt.\nonumber
\end{eqnarray}
Similarly, we can extend Theorem \ref{t3.5} to the general RHF by the same argument if an 
analogous result of Theorem \ref{t3.4} holds for the general RHF. However, this is obvious, because alone 
the outline of the proof of Theorem \ref{t3.4} we have
\begin{eqnarray}
\frac{d}{dt}\int_{M}f\!\ dV_{g(t)}
&\leq&\int_{M}\left(-f^{2}+88f\right)dV_{g(t)}-\dot{\alpha}(t)A_{1}\int_{M}(S+C)^{4}dV_{g(t)}\nonumber\\
&&+ \ \left[128\pi^{2}\chi(M)
+52(\alpha(t) A_{1})^{2}{\rm Vol}(M, g(0))e^{2t}
+2\alpha(t) A_{2}(t)\right]\label{4.6}
\end{eqnarray}
if $(M, g(t), \phi(t))_{t\in[0,T)}$ satisfies {\bf BA}.
\\

{\bf Acknowledgments.} The main results were announced in the 2015 Chinese-German workshop on metric
Riemannian geometry at Shanghai Jiao Tong University from Oct. 12 to Oct. 16. The author thanks
the organizers of this workshop.



\end{document}